\documentclass[11pt]{article}
\usepackage[numbers]{natbib}
\usepackage{graphicx,amsmath,epsfig,amssymb,amsfonts,amsthm,enumerate,verbatim,todonotes,hyperref,mathtools, dsfont}

\usepackage{tikz,tikz-cd}
\usetikzlibrary{matrix,arrows}

\newtheorem{theorem}{Theorem}[section]
\newtheorem{corollary}[theorem]{Corollary}
\newtheorem{lemma}[theorem]{Lemma}

\newtheorem{question}[theorem]{Question}
\newtheorem{claim}[theorem]{Claim}

\theoremstyle{definition}
\newtheorem{definition}[theorem]{Definition}

\newcommand{\restrict}{\mathord{\upharpoonright}}

\begin{document}

\title{Splitting Localization and Prediction Numbers}
\author{Iv\'an Ongay-Valverde\footnote{Work done while being supported by CONACYT scholarship for Mexican student studying abroad.}\\\emph{Department of Mathematics}\\\emph{University of Wisconsin--Madison}\\\emph{Email: ongay@math.wisc.edu}
}

\date{\begin{tabular}{rl}First Draft:&September 15, 2017\\Current Draft:&August 7, 2019\end{tabular}}
\maketitle
\begin{abstract}
In this paper the work done by Newelski and Roslanowski in \cite{newelski1993ideal} is revisited to solve a question posed by Blass about one of the possible evasion and prediction numbers (see \cite{blass}). This led to define a variation of the $k$-localization property (the $(k+1)^{\omega}$-localization property) and the use of a forcing notion with accelerating trees. 
\end{abstract}

\section{Introduction}

In 1993 Newelski and Roslanowski defined the $k$-localization number, $\mathcal{L}_{k}$ (see \cite{newelski1993ideal}), as the minimal cardinality of a family $\mathcal{T}$ of $k$-trees such that every element $(k+1)^{\omega}$ is a branch of a tree in $\mathcal{T}$.\footnote{In their work, they originally studied ideals of unsymmetric games. The covering numbers of those ideals are the ones that we called $k$-localization numbers.}

In their paper, they proved that $\mathcal{L}_{k+1}\leq \mathcal{L}_{k}$ and that it is consistent to have $\mathcal{L}_{k+1}<\mathcal{L}_{k}$. In order to do this, they introduce the $k$-localization property\footnote{The $k$-localization property says ``all the reals in $\omega^{\omega}$ of the generic extension are a branch of a $k$-tree from the ground model.''} that was later studied by Roslanowski \cite{roslanowski2006n} and Zapletal \cite{zapletal2008n}. These properties were also used by Geschke \cite{geschke2002convexity} to show that it is consistent to have $\mathcal{L}_{i}=f(i)$ for any non-increasing function from a natural number to the cardinals with uncountable cofinality.

Nevertheless, the $k$-localization property is not the minimum necessary to have $\mathcal{L}_{k}^{V}=\mathcal{L}_{k}^{V[G]}$. The minimum that we need to have would be the property ``all the reals in $(k+1)^{\omega}$ of the generic extension are a branch of a $k$-tree from the ground model" (we call this the $(k+1)^{\omega}$-localization property). Does this mean that there is a cardinal characteristic that is closer to the $k$-localization property than $\mathcal{L}_{k}$?

There is one. In his chapter of the Handbook of Set Theory \cite{blass}, Andreas Blass talks about cardinal characteristics related to the concepts of evasion and prediction. At the end of that section, he introduces 36 variations of these cardinals and left as an open question to pin down 4 of them whose identity didn't appear to be one of the known cardinal characteristic. It turns out that the same proof of Newelski and Roslanowski shows that one of these variations, specifically the prediction number for global adaptive $k$ predictors, is not one of the known cardinal characteristics and, actually, gives countable many cardinal characteristics which, consistently, can take different values (see Theorem \ref{Prediction Hierarchy}).

This triggers the following question: are the variation of prediction and the $k$-localization number equal? This paper shows that they are not. It is consistent to have all the prediction numbers mention above at value $\aleph_{2}=\mathfrak{c}$ and all localization numbers at value $\aleph_{1}$ (see Theorem \ref{main theorem}).

To do this we use a forcing that Noah Schweber and the author called accelerating tree forcing. We created it for a computability theory question in a coauthor paper still in preparation\footnote{It is possible to find an early version of it at http://www.math.wisc.edu/~ongay/publications.html} and we will show in this paper that countable support product of the accelerating tree forcing has the $3^{\omega}$-localization property.

It has been pointed out to us that the accelerating tree forcing could be related to bushy tree forcing (as done in \cite{khan2017forcing}) or other fast-growing tree forcing (as done in \cite{ciesielski1999model}). Although we got inspiration from them, the fact that we allowed long stretches with no split makes us believe that this forcing is conceptually of a different kind.
	
It is also important to remark that countable support iteration and product of forcings with the $(k+1)^{\omega}$-localization property could also have the $(k+1)^{\omega}$-localization property, as the $k$-localization property (see \cite{zapletal2008n}). Notice that these two properties are in the same line as the Sacks property. It is unknown to the author if there is a bigger theory or theorem that handle all of them at once. This, we believe, is an interesting topic.

About the structure of the paper, it has a first section with definition and background. Then, the bulk of the work is done in Section \ref{Lemmas section}, were the lemmas involving the $(k+1)^{\omega}$ localization property are shown. Section \ref{Main reusl section} has the main theorem (Theorem \ref{main theorem}) with some conclusions and open problems. The last section was added after the paper was first submitted, it includes results that the author discover after sharing the work here presented.

Finally, the author wants to thank Noah Schweber for his support and for convincing me about publishing this work; Arnold Miller, for showing me a new way to order my thoughts, and Kenneth Kunen for all the advice and guidance with this project and others.

Also, special thanks to the referee who help refine the paper overall and give important suggestions to improve the presentation of Theorem \ref{iteration lemma}. 

\section{Definitions and background}

These first definitions will be useful during the paper:

\begin{definition}
\begin{enumerate}

\item We say that $T\subseteq \omega^{<\omega}$ is a tree if and only if given $\sigma\in T$ we have that $\sigma\restrict j\in T$ for all $j<|\sigma|$.

\item A $k$-branching tree, is a tree such that every node has either $1$ successor or $k$ of them.

\item A $k$-tree is a tree such that every node has at least $1$ successor and no more than $k$.
\end{enumerate}
\end{definition}

Now, the following definition is due to \cite{newelski1993ideal} (they express it as the covering number of an ideal):

\begin{definition}
The $k$-localization number, $\mathfrak{L}_{k}$, is the smallest cardinality of a family of $k$-branching trees that cover $(k+1)^{\omega}$.
\end{definition}

Notice that the definition is not trivial for $k\geq 2$. Furthermore, Newelski and Roslanowski showed in \cite{newelski1993ideal} that, for $k\geq 2$, $\mathfrak{L}_{k}\geq \max\{cov(\mathcal{M}), cov(\mathcal{N})\}$, that $\mathfrak{L}_{k+1}\leq\mathfrak{L}_{k}$  and that it is consistent that $\mathfrak{L}_{k+1}<\mathfrak{L}_{k}$.

On the other hand, in Blass's chapter of the Handbook of Set Theory \cite{blass} he defines:

\begin{definition}
\begin{enumerate}
\item A $k$ globally adaptive predictor is a sequence of functions $\pi=\langle \pi_{n}:n\in \omega\rangle$ with $\pi_{n}:\omega^{n}\rightarrow [\omega]^{k}$. We say that a function $f\in \omega^{\omega}$ is predicted by $\pi$ if there is $m\in\omega$ such that for all $n>m$, $f(n)\in \pi_{n}(f\restrict n)$.

\item The $k$ globally prediction number, $\mathfrak{v}_{k}^{g}$, is the minimal cardinality of a set of $k$ globally adaptive predictors that predict all functions in $\omega^{\omega}$.

\item The $k$ globally evasion number, $\mathfrak{e}_{k}^{g}$, is the minimal cardinality of a set of functions in $\omega^{\omega}$ such that the whole set is not predicted by a single $k$ globally adaptive predictor.
\end{enumerate}
\end{definition}

It is important to make some remarks about the last definition: 
\begin{itemize}
\item The `adaptive' part refers to the fact that $\pi_{n}$ is not constant. Non-adaptive objects are closer to slaloms (or traces).

\item The `globally' part of the definition refers to the fact that we have $\pi_{n}$ for all $n\in \omega$. It is possible to define predictors using $\pi_{n}$ for $n\in D\subsetneq \omega$. 

\item Blass do not give a notation for this number, so the notation $\mathfrak{v}_{k}^{g}$ and $\mathfrak{e}_{k}^{g}$ is introduced here.

\item These definitions are not trivial for $k\geq 2$.

\end{itemize}

The numbers $\mathfrak{v}_{k}^{g}$ and $\mathfrak{e}_{k}^{g}$ are duals between them and, by the work done in \cite{blass}, we know that $\mathfrak{m}_{\sigma-\mbox{centered}}\leq \mathfrak{e}_{k}^{g} \leq \mbox{add}(\mathcal{N})$. So, by duality, we know that $\mbox{cof}(\mathcal{N})\leq \mathfrak{v}_{k}^{g}\leq \mathfrak{c}$. Also, from the definition, we have that $\mathfrak{v}_{k+1}^{g}\leq \mathfrak{v}_{k}^{g}$.

Reading the definition more carefully we can notice that all the functions that are predicted by a $k$-globally adaptive predictor are covered by $\aleph_{0}$ many $k$-branching trees, so $\mathfrak{v}_{k}^{g}$ is also the minimum cardinal of a set of $k$-branching trees (or $k$-trees) that cover $\omega^{\omega}$.

Furthermore, in \cite{newelski1993ideal}, we have the following result:

\begin{theorem}\label{Prediction Hierarchy}
Given $k\geq 2$, it is consistent to have ZFC+$cof(\mathcal{N})= \mathfrak{v}_{k+1}^{g}<\mathfrak{v}_{k}^{g}=\mathfrak{c}$.
\end{theorem}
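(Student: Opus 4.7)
The plan is to adapt the Newelski--Roslanowski argument separating $\mathfrak{L}_{k+1}$ from $\mathfrak{L}_k$, using the observation recorded just above the theorem that $\mathfrak{v}_k^g$ equals the minimum cardinality of a family of $k$-branching trees covering $\omega^{\omega}$. The crucial arithmetic is simply that $k+1 > k$: a generic function taking $(k+1)$-many possible values at each split level can always diagonalize against any fixed ground-model $k$-branching tree, and since $(k+1)^{\omega} \subseteq \omega^{\omega}$ such a real witnesses non-coverage for $\mathfrak{v}_k^g$, not only for $\mathfrak{L}_k$. So the Newelski--Roslanowski generic reals serve double duty.

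Concretely, I would start from $V \models$ GCH and take the Newelski--Roslanowski tree forcing $Q_{k+1}$, whose conditions are perfect $(k+1)$-branching trees ordered by inclusion under a fusion refinement guaranteeing the $(k+1)$-localization property: every real in $\omega^{\omega} \cap V[G]$ lies on a $(k+1)$-branching tree coded in $V$. The generic real $f_G$ lives in $(k+1)^{\omega}$, and by the pigeonhole inequality $k+1 > k$, at every split node of any condition there is some successor not appearing above the corresponding node in a prescribed ground-model $k$-branching tree $S$; thinning through such a successor yields density, so $f_G$ is forced not to be a branch of any $k$-branching tree in $V$. I would then perform a countable support iteration $\langle P_\alpha, \dot{Q}_\alpha : \alpha < \omega_2 \rangle$ of (names for) $Q_{k+1}$.

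Countable support iterations of proper forcings with the $(k+1)$-localization property preserve that property, by the analogue of the preservation theorems of Roslanowski and Zapletal cited in the introduction; hence in $V[G_{\omega_2}]$ every real in $\omega^{\omega}$ is a branch of some $(k+1)$-branching tree sitting in an intermediate model of size $\aleph_1$. A bookkeeping count then yields $\mathfrak{v}_{k+1}^g \leq \aleph_1$, and combined with the standing inequality $\mbox{cof}(\mathcal{N}) \leq \mathfrak{v}_{k+1}^g$ this forces $\mbox{cof}(\mathcal{N}) = \mathfrak{v}_{k+1}^g = \aleph_1$. For the other inequality, any $\aleph_1$-sized family of $k$-branching trees in $V[G_{\omega_2}]$ already appears in some intermediate model $V[G_\alpha]$ with $\alpha < \omega_2$, by properness and the $\aleph_2$-chain condition of the iteration, and the generic real added at stage $\alpha$ evades every tree in that family; therefore no $\aleph_1$-family of $k$-branching trees covers $\omega^{\omega}$, giving $\mathfrak{v}_k^g = \aleph_2 = \mathfrak{c}$.

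The main technical obstacle is the fusion argument establishing that countable support iteration preserves the $(k+1)$-localization property; this is parallel to the treatments by Roslanowski and Zapletal in the $k$-localization setting, but has to be redone with trees in $\omega^{<\omega}$ rather than in a bounded space. Once properness and this preservation theorem are in hand, the separation of the two cardinals reduces to the original Newelski--Roslanowski combinatorial core together with the embedding $(k+1)^{\omega} \hookrightarrow \omega^{\omega}$.
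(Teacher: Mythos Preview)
Your proposal is correct and follows essentially the same route as the paper: both observe that the Newelski--Roslanowski model already does the job, because the $(k+1)$-localization property (every $\omega^\omega$-real lies on a ground-model $(k+1)$-tree) pins $\mathfrak{v}_{k+1}^g$ at $\aleph_1$, while the generic real, living in $(k+1)^\omega\subseteq\omega^\omega$ and avoiding every ground-model $k$-tree, pushes $\mathfrak{v}_k^g$ to $\mathfrak{c}$.

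Two minor remarks. First, the paper (and the original Newelski--Roslanowski argument) uses a countable support \emph{product} rather than an iteration; your iteration version works too, but the product is what is already in the literature. Second, your ``main technical obstacle''---preservation of the $(k+1)$-localization property---is not an obstacle at all here: this is precisely what Newelski and Roslanowski proved for their product, so nothing needs to be redone. The paper's whole point is that Theorem~\ref{Prediction Hierarchy} is a free corollary of their existing proof, requiring no new fusion argument; you are making the task sound harder than it is.
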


This theorem is a corollary of the proof of:

\begin{theorem}[Newelski, Roslanowski \cite{newelski1993ideal}]
Given $k\geq 2$, it is consistent to have ZFC+$cof(\mathcal{N})= \mathfrak{L}_{k+1}<\mathfrak{L}_{k}=\mathfrak{c}$.
\end{theorem}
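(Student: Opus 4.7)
I would start from $V \models \mathrm{GCH}$, so that initially $\mathfrak{L}_k = \mathfrak{L}_{k+1} = \mathrm{cof}(\mathcal{N}) = \aleph_1$, and then perform a countable support iteration of length $\omega_2$ of a proper tree forcing $Q$ tailored to level $k+1$. The conditions of $Q$ are subtrees $T \subseteq (k+1)^{<\omega}$ in which every node either has a unique successor or splits into all $k+1$ immediate successors, ordered by inclusion with a fusion order that preserves finite initial skeletons of full-splitting nodes. The generic object is a branch $b \in (k+1)^\omega$.

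Step one is to verify, by fusion, two properties of $Q$. (i) Every new $f \in (k+2)^\omega$ in $V^Q$ lies on a $(k+1)$-branching tree from $V$: given a name $\dot f$ and a condition $T$, build $T = T_0 \geq T_1 \geq \cdots$ so that at stage $n$ every node at the $n$-th full-splitting level of $T_n$ decides $\dot f \restrict n$, then assemble the target $(k+1)$-branching tree from the resulting finitely many values at each level. (ii) The generic $b$ is not a branch of any ground-model $k$-tree $S$: at a full-splitting node $\sigma \in T$, the tree $S$ has at most $k$ immediate successors above $\sigma$, so some $\sigma \concat \langle i \rangle \in T$ lies outside $S$, and strengthening $T$ below that extension is dense.

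Step two is the countable support iteration $\mathbb{P} = \langle P_\alpha, \dot Q_\alpha : \alpha < \omega_2\rangle$ with every $\dot Q_\alpha$ a name for $Q$. Standard bookkeeping over GCH yields $\mathfrak{c}^{V[G]} = \aleph_2$. Any family of $\aleph_1$ many $k$-trees in $V[G]$ appears in some intermediate model $V[G_\alpha]$ with $\alpha<\omega_2$, and (ii) applied at stage $\alpha$ forces the generic $b_\alpha$ to escape them all, giving $\mathfrak{L}_k = \aleph_2$. Dually, a preservation theorem, namely that countable support iterations of proper forcings satisfying (i) still satisfy (i), implies that the $\aleph_1$-sized family of all $(k+1)$-branching trees of $V$ continues to cover $(k+2)^\omega$ in $V[G]$, so $\mathfrak{L}_{k+1} \leq \aleph_1$. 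Because (i) implies a Sacks-style bound, it also keeps $\mathrm{cof}(\mathcal{N}) = \aleph_1$, and these inequalities combine to give $\mathrm{cof}(\mathcal{N}) = \mathfrak{L}_{k+1} = \aleph_1 < \aleph_2 = \mathfrak{L}_k = \mathfrak{c}$.

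The main obstacle is the preservation theorem in step two: one must thread a fusion argument through all iteration coordinates simultaneously, matching splitting levels uniformly across coordinates and controlling the resulting product-fusion tree so that it remains $(k+1)$-branching. This is the analogue for $Q$ of the Sacks-property preservation theorem for countable support iterations, and it is the technical heart that the author would mirror in Section \ref{Lemmas section} when proving the corresponding statement for the accelerating tree forcing.
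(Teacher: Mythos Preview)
Your outline follows the Newelski--Ros{\l}anowski strategy almost exactly: force with $(k+1)$-branching subtrees of $(k+1)^{<\omega}$, use a fusion argument to control new reals, and use the generic branch to push $\mathfrak{L}_k$ up. The paper's summary of their argument (just after the theorem statement) and your plan agree on all the main points. Two remarks:

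\textbf{Iteration versus product.} You run a countable support \emph{iteration}; Newelski--Ros{\l}anowski, and this paper in its own analogous result, use a countable support \emph{product}. Either works here, since the factor forcing is sufficiently absolute, but the preservation arguments are not identical: for a product one may pass directly to coordinate-wise fusion, whereas for an iteration one has to weave the fusion through names. What you sketch in your final paragraph is closer to the product argument.

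\textbf{A gap in the treatment of $\mathrm{cof}(\mathcal{N})$.} Your property (i) is stated only for $f\in (k+2)^\omega$: every such $f$ lies on a ground-model $(k+1)$-tree. That is enough for $\mathfrak{L}_{k+1}=\aleph_1$, but it is \emph{not} enough to conclude a ``Sacks-style bound'' on arbitrary reals, and hence not enough to pin $\mathrm{cof}(\mathcal{N})$ at $\aleph_1$. The characterisation of $\mathrm{cof}(\mathcal{N})$ via slaloms requires covering all of $\omega^\omega$, not merely $(k+2)^\omega$. What Newelski and Ros{\l}anowski actually prove---and what your fusion sketch in (i) would in fact establish with no extra work---is the full $(k+1)$-localization property: every $f\in\omega^\omega$ in the extension is a branch of a $(k+1)$-tree from the ground model. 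That stronger statement immediately gives the Sacks property (a $(k+1)$-tree has at most $(k+1)^n$ nodes at level $n$), and hence $\mathrm{cof}(\mathcal{N})=\aleph_1$. So the fix is simply to state and prove (i) for $\omega^\omega$ rather than $(k+2)^\omega$; the argument you wrote already does this, since nowhere in your fusion do you use that $\dot f$ takes values in $k+2$.
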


Specifically, it comes from two facts: first, that the forcings that were used have the $k$-localization property. This is that ``every real in $\omega^{\omega}$ is a branch of a $k$-tree from the ground model'', this keeps $\mathfrak{L}_{k+1}$ and $\mathfrak{v}_{k+1}^{g}$ at $\aleph_{1}$; and the forcing adds a function in $(k+1)^{\omega}$ that is not the branch of any $k$-tree from the ground model. Notice that this function is also a function in $\omega^{\omega}$  that is not the branch of any $k$-tree from the ground model. Once you take a countable support product, this makes $\mathfrak{L}_{k}$ and $\mathfrak{v}_{k}^{g}$ of size $\mathfrak{c}$.

The relation between these two cardinal characteristics is more evident once we realize, from the tree definition of $\mathfrak{L}_{k}$ and $\mathfrak{v}_{k}^{g}$, that $\mathfrak{L}_{k}\leq \mathfrak{v}_{k}^{g}$. So, a natural question arises of whether $\mathfrak{L}_{k}=\mathfrak{v}_{k}^{g}$. The goal of this paper is to answer the question in a negative way.

\section{Combinatorial lemmas and localization properties}\label{Lemmas section}

In this setting it is better to understand some of the processes as combinatorial principles instead of parts of a forcing argument. Because of that, the following lemmas come in pairs: one is a combinatorial statement and the following one is the forcing result.

\begin{lemma}\label{comb case 1}
Given $\{f_{i}: i\in I\}\subseteq 3^{\omega}$ with $|I|=3^{n}$ you can find $S\subseteq I$ with $|S|=n$ such that $\{f_{i}\restrict n : i\in S, n\in\omega\}$ is a $2$-tree.
\end{lemma}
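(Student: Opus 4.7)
The plan is to prove this by induction on $n$. The base case $n = 1$ is immediate: any single function in $3^\omega$ yields a stick, which is trivially a $2$-tree (every node has exactly one successor).

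For the inductive step, assume the result for $n-1$ and take a family $\{f_i : i \in I\} \subseteq 3^\omega$ with $|I| = 3^n$. If all the $f_i$ coincide as functions, any $n$-element subset of $I$ works; otherwise, let $m^*$ be the least level at which they fail to all agree and partition $I$ into three classes $I_0, I_1, I_2$ according to $f_i(m^*) \in \{0,1,2\}$. By pigeonhole one class has size at least $3^{n-1}$; relabel so this class is $I_0$. By the choice of $m^*$ at least one of the other classes is nonempty; say $I_1 \neq \emptyset$. Apply the inductive hypothesis to any $3^{n-1}$ functions inside $I_0$ to obtain $S_0 \subseteq I_0$ with $|S_0| = n-1$ whose associated tree is a $2$-tree. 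Pick any $f^* \in I_1$ and set $S := S_0 \cup \{f^*\}$, so $|S| = n$.

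To verify that $T_S = \{f_i \restrict m : i \in S,\, m \in \omega\}$ is a $2$-tree, observe that for $m < m^*$ all functions in $S$ agree, so $T_S$ on these levels is a stick; at level $m^*$ the unique node has exactly two successors, namely the value coming from $S_0 \subseteq I_0$ and the value coming from $f^* \in I_1$; and for $m > m^*$, any node $\sigma$ of $T_S$ of length $m$ extends one of the two branches opened at level $m^*$. In the first case only members of $S_0$ pass through $\sigma$, so by the inductive $2$-tree property the node has at most two successors; in the second case only $f^*$ passes through $\sigma$, contributing at most one successor. Hence every node of $T_S$ has between one and two successors.

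The main technical point is the tight alignment of the pigeonhole cost (a factor of $3$) with the inductive drop from $3^n$ to $3^{n-1}$, while the ``budget'' of $n$ branches is decreased by exactly one to purchase the isolated branch $f^*$ sitting outside $I_0$. I do not expect any deeper obstacle beyond this accounting and the bookkeeping required to handle the degenerate cases $m^* = 0$ and when all $f_i$ coincide.
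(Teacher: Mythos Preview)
Your argument is correct and follows essentially the same route as the paper: induction on $n$, locate the first level $m^*$ of disagreement, use pigeonhole to find a class of size at least $3^{n-1}$, apply the induction hypothesis there, and adjoin a single function from another class. Your write-up is in fact a bit more careful than the paper's, since you spell out why the resulting set of restrictions remains a $2$-tree at every level.
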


\begin{proof}
We will do the proof by induction.
 
For $n=0$ and $n=1$ it is trivially true.

Now, assume that it is true for $n$, we will prove it for $n+1$.

Given $\{f_{i}: i\in I\}\subseteq 3^{\omega}$ with $|I|=3^{n+1}$ if all of them are the same function then take the first $n+1$ of them, they make trivially a $2$-tree. On the other hand, if there are two of them that are different, find the first natural number $m$ such that two of them differ. Notice that, using a pigeon hole principle, there is a value $k\in 3$ such that there is $J\subseteq I$, with $|J|\geq 3^{n}$ such that for all $i\in J$ we have $f_{i}(m)=k$.

Now, take $i_{0}\in I$ such that $f_{i_{0}}(m)\neq k$ and let $S'\subseteq J$ be the index set of size $n$ given after using the induction hypothesis over $J$. Notice that $\{f_{i_{0}}\restrict j: j\in \omega\}\cup\{f_{i}\restrict j: i\in S', j\in \omega\}$ forms a $2$-tree and that $S=S'\cup\{i_{0}\}$ has size $n+1$.

\end{proof}

\begin{lemma}\label{ground lemma}
There is a forcing notion that adds a function from $\omega$ to $\omega$ that is not predicted by any $k$-global adaptive predictor but such that all reals in $3^{\omega}$ are a branch of a $2$-tree in the ground model.
\end{lemma}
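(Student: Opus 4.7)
The plan is to introduce a tree-style forcing $\mathbb{P}$ whose conditions are trees $T \subseteq \omega^{<\omega}$ equipped with an increasing sequence of splitting levels $\ell_0^T < \ell_1^T < \cdots$, such that every node of $T$ at a non-splitting level has a unique successor in $T$, every node at a splitting level has exactly three successors, and the successor values at each $\ell_i^T$ grow at an accelerating rate governed by a parameter of the condition. The order is tree inclusion, refined by a customary fusion preorder $T' \leq_n T$ requiring that $T'$ retains all of $T$ through its first $n$ splitting levels. The generic real $f \in \omega^\omega$ is the unique branch of $\bigcap_{T \in G} T$.

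That $f$ evades every ground-model $2$-global adaptive predictor $\pi$ should follow by a density argument. Given $T$, $\pi$, and any bound $N$, pick a splitting level $\ell > N$ of $T$; at each length-$\ell$ node $\tau$ of $T$, the three successor values are distinct, so at least one of them avoids the $2$-element set $\pi_\ell(\tau)$. Thinning $T$ to keep only that chosen successor at every length-$\ell$ node gives a refinement forcing $f(\ell) \notin \pi_\ell(f \restrict \ell)$. Hence the set of conditions forcing some escape past $N$ is dense, and $f$ escapes $\pi$ at infinitely many positions.

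The substantive work is the $3^\omega$-localization property. Given a name $\dot g$ forced into $3^\omega$ and a condition $T$, I plan to build, by fusion, a refinement $T^* \leq T$ and a 2-tree $S \in V$ with $T^* \Vdash \dot g \in [S]$. Invariantly, at stage $n$ the condition $T_n$ will have $n$ selected length-$\ell_n$ nodes, above each of which the subtree has been refined to decide $\dot g \restrict m_n$ to one of $n$ specified initial segments in $3^{m_n}$ whose union of prefixes forms the stage-$n$ 2-tree $S_n$. To pass to stage $n+1$, I split each of the $n$ surviving sub-conditions enough further times (using the accelerating flexibility of $\mathbb{P}$) to obtain $3^{n+1}$ new sub-sub-conditions, and within each one refine to decide a further initial segment of $\dot g$. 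Applying Lemma \ref{comb case 1} to the resulting $3^{n+1}$ segments yields $n+1$ of them whose prefixes form a 2-tree $S_{n+1}$; thinning $T_n$ to keep only the corresponding sub-sub-conditions produces $T_{n+1}$.

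The principal obstacle I anticipate is coherence: the selections at successive stages must extend so that $S = \bigcup_n S_n$ is a genuine 2-tree in $V$ containing the actual value of $\dot g$. I intend to handle this by applying Lemma \ref{comb case 1} branch-by-branch within $S_n$ at stage $n+1$, balancing the counts across the $n$ surviving branches so that each new selection sits above an old leaf. Once coherence is in hand, the fusion limit $T^* = \bigcap_n T_n$ is a legitimate condition of $\mathbb{P}$ (the $\leq_n$ tower guarantees the splitting structure stabilizes), and by construction $T^* \Vdash \dot g \in [S]$.
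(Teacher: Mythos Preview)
Your forcing, with \emph{exactly three} successors at every splitting node, handles $k=2$ but not the lemma as stated: it fails to evade $k$-predictors for $k\ge 3$. Concretely, fix a condition $T$ and define $\pi$ in $V$ by letting $\pi_\ell(\tau)$ be any $3$-element set containing the three successor values of $\tau$ in $T$ whenever $\tau$ is a splitting node of $T$ (and arbitrary otherwise). Then every branch of $T$ is predicted by this ground-model $3$-predictor from the stem onward, and since refinements $T'\le T$ only delete nodes, no extension of $T$ forces escape. The paper's accelerating trees avoid this precisely because the $n$-th splitting node has at least $n+2$ successors: for any fixed $k$, go down to a split of width $\ge k+1$ and one successor must miss the $k$-element prediction (equivalently, leave any ground-model $k$-tree).

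The unbounded splitting is equally what makes the $3^\omega$-localization fusion clean. In the paper one finds, below a surviving node $\sigma$ at stage $n$, a \emph{single} split with at least $3^n$ immediate successors, decides a longer initial segment of $\dot g$ below each, and applies Lemma~\ref{comb case 1} once to pick $n$ of them whose segments form a $2$-tree; only one splitting level is consumed, so the fusion order $\le_n$ advances by one. Your plan instead descends through many $3$-way splits to manufacture $3^{n+1}$ leaves, which forces you to freeze several splitting levels per fusion step and creates exactly the coherence problem you flag: applying Lemma~\ref{comb case 1} ``branch-by-branch'' across the $n$ surviving sub-conditions does \emph{not} guarantee a global $2$-tree, since the chosen extensions from distinct branches may meet at a common node of $3^{<\omega}$ and produce a $3$-way split there. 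The paper resolves this coherence (in the full product version, Lemma~\ref{iteration lemma}) via a maximization step (Observation~C) that forces any later disagreement of $\dot g$ to already be visible at the current stage; your sketch has no analogue of this, and without it the inductive invariant that $\bigcup_n S_n$ is a $2$-tree is not maintained.

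In short: replace fixed $3$-way splitting by accelerating splitting (width $\ge n+2$ at the $n$-th split). That single change repairs the evasion argument for all $k$ and lets you run Lemma~\ref{comb case 1} at one split per fusion step, which is the mechanism the paper uses.
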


\begin{proof}

\begin{definition}
We say that $T\subseteq \displaystyle \bigcup_{m\in \omega}\prod_{n\in m}^{m}(n+1)$ is an accelerating tree if and only if it is a subtree of $\displaystyle \bigcup_{m\in \omega}\prod_{n\in m}^{m}(n+1)$, if every node has an extension that splits and given $\sigma\in T$ such that there are $k_{i}\in \omega$, $i<n$, such that $\sigma\restrict k_{i}$ is a splitting node (i.e., $\sigma$ has $n$ splits before it) then $\sigma$ has either $1$ successor or  at least $n+2$.\footnote{During the conference 'Set Theory of the Reals', BIRS-CMO Oaxaca, August 2019, it was brought to our attention that this forcing was originally defined by Geschke in \cite{GeschkeDual} as Miller Lite Forcing.}
\end{definition}

Let $\mathds{P}$ be the forcing notion whose conditions are of the form $\langle \tau, T\rangle$ with $\tau\in \displaystyle \bigcup_{m\in \omega}\prod_{n\in m}^{m}(n+1)$ and $T$ an accelerating subtree of $\displaystyle \bigcup_{m\in \omega}\prod_{n\in m}^{m}(n+1)$ extending $\tau$. We say that $\langle \tau', T'\rangle\leq \langle \tau, T\rangle$ if and only if $\tau\subseteq \tau'$, $T'\subseteq T$ and $\tau'\in T$.\footnote{We decide to define the acceleration tree forcing using pairs to create a stronger resemblance to the effective analogue of accelerating trees of $\omega^{\omega}$ (paper in preparation with Noah Schweber). Furthermore, this will allow us to easily define $(T)^{0}$ in Lemma \ref{iteration lemma}. }

For a node $\rho\in T$, let \[T_{\rho}=\{\tau\in T : \tau\subseteq \rho\ \vee \ \rho\subseteq \tau\}\]

Notice that given any $k$-tree $U\subseteq \omega^{<\omega}$ and  a condition $\langle \tau, T\rangle$, there is $\rho\in T$ that is not a node in $U$ (for example, go to a split with $k+1$ nodes, one of them is not in $U$). Furthermore, if we take the condition $\langle \rho, T_{\rho} \rangle$, none of the branches of $T_{\rho}$ are branches of $U$. This shows that forcing with accelerating tree forcings adds a function from $\omega$ to $\omega$ that is not predicted by any $k$-global adaptive predictor.

Now, we will only give a sketch on how to proof that all reals in $3^{\omega}$ are a branch of a $2$-tree in the ground model. This part of the theorem is a corollary of Lemma \ref{iteration lemma} letting $\kappa=1$. Also, the forcing is the set theoretical version of the forcing used in a paper to appear with Noah Schweber\footnote{You can find an early version of it at http://www.math.wisc.edu/~ongay/publications.html .}. From that proof, translating from computability theory to set theory, we have the desired result.

Now, the sketch: the idea is that given a condition $p\in\mathds{P}$ and a $\mathds{P}$-name $\dot f$ such that $p\Vdash \dot f\in 3^{\omega} $ we can define, in $V,$ a $2$-tree, $A$, and a condition $q\leq p$ such that $q\Vdash \dot f\in A $. To do this, you can prune the tree of $p$, call it $T$, in such a way that given $\sigma\in T$, an $n$-th split node of $T$, there is $g_{\sigma}\in 3^{n} $ such that $\langle \sigma, T_{\sigma}\rangle\Vdash \dot f\restrict n= g_{\sigma}$.

We are looking for $T'\subseteq T$ such that $\{g_{\sigma} : \sigma\in T' \mbox{ $n$-split of $T$, } n\in \omega\} $ is inside a ground model $2$-tree we do the following process: if we already decided that $\sigma\in T'$ then we look for an extension of $\sigma$ with $3^{n}$ successors, say $\sigma_{i}$, $i\in 3^{n}$. Now we look for $M\in \omega$, $\rho_i$ extending $\sigma_{i}$ and $g_i\in 3^{M}$ such that $\langle \rho_i, T_{\rho_{i}}\rangle\Vdash \dot f\restrict M= g_{i}$. Now, running the above combinatorial Lemma \ref{comb case 1}, select $S\subseteq 3^{n}$ such that $\{g_i\restrict j: i\in S, j<M\}$ is a $2$-tree (definable in $V$ using the definability lemma of forcing).  Finally, we ensure that there is no spliting between $\sigma_{i}$ and $\rho_{i}$ in T' and we ask that $\sigma_{i}, \rho_{i}\in T'$ if and only if $i\in S$.
\end{proof}

\begin{lemma}\label{comb iteration}
Given $\{f_{i}^{j}: i\in I, j\in k\}\subseteq 3^{\omega}$ with $k\in \omega$, $|I|=N(n,k)$ a big enough number and $m\in \omega$ that makes $\{f_{i}^{j}\restrict l: i\in I, j\in k, l\in m+1\}$ a $2$-tree such that if $f_{i}^{j}\restrict m=f_{s}^{t}\restrict m$ with $t\neq j$ we have that $f_{i}^{j}=f_{s}^{t} $ then you can find $S\subseteq I$ with $|S|=n$ such that $\{f_{i}^{j}\restrict l: i\in S, j\in k, l\in \omega\}$ is a $2$-tree.
\end{lemma}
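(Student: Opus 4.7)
The plan is to generalise Lemma~\ref{comb case 1} to the $k$-fold setting by exploiting the structural hypothesis at level~$m$. First, I would pass to a large subset $I' \subseteq I$ on which the ``level-$m$ signature'' $(f_i^1\restrict m,\ldots,f_i^k\restrict m)$ is constant, say equal to $(\sigma_1,\ldots,\sigma_k)$. Since the initial fragment is a $2$-tree up to level~$m$, each $f_i^j\restrict m$ ranges over at most $2^m$ nodes, giving at most $2^{mk}$ possible signatures. Choosing $N(n,k)$ to be $2^{mk}$ times a suitably fast-growing $T(n,k)$, a pigeonhole yields $I'$ with $|I'|\geq T(n,k)$.

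Next, I would use the hypothesis to classify columns. Partition $\{0,\ldots,k-1\}$ into equivalence classes by $j\sim t$ iff $\sigma_j=\sigma_t$. For $j\neq t$ in the same class and any $i,s\in I'$, the hypothesis gives $f_i^j=f_s^t$ because their restrictions to $m$ already coincide. A short chase of these equalities shows that in any multi-column class, all the $f_i^j$ (for $i\in I'$ and $j$ in the class) collapse to a single function, independent of both $i$ and $j$; so such classes contribute only one branch above their common leaf and trivially respect the $2$-tree property. Non-trivial behaviour therefore occurs only at the leaves $\sigma_{(1)},\ldots,\sigma_{(r)}$ coming from singleton classes, where $r\leq k$; let $j_s$ be the unique column associated to $\sigma_{(s)}$.

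Finally, I would iterate Lemma~\ref{comb case 1}. Starting from $I_0=I'$, for each $s=1,\ldots,r$ apply Lemma~\ref{comb case 1} to the family $\{f_i^{j_s}:i\in I_{s-1}\}$ (discarding down to a power of~$3$ if needed) to extract $I_s\subseteq I_{s-1}$ of size roughly $\log_3 |I_{s-1}|$ whose associated functions form a $2$-tree above $\sigma_{(s)}$. The key observation is that passing to a subset only reduces the number of children at each node, so the $2$-tree property already secured above $\sigma_{(1)},\ldots,\sigma_{(s-1)}$ is preserved by subsequent iterations. After $r\leq k$ iterations, any $S\subseteq I_r$ with $|S|=n$ is the required set, provided $T(n,k)$ was chosen as a tower of threes of height $k$ starting at~$n$; this fixes $N(n,k)$.

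The main obstacle is the bookkeeping caused by a single index $i$ simultaneously contributing $k$ functions---one per column---which may thread through $k$ different leaves at level~$m$. The hypothesis at level~$m$ is precisely the device that tames this interaction: it forces multi-column equivalence classes to collapse into a single function, so that the actual combinatorial work reduces to one invocation of Lemma~\ref{comb case 1} per singleton leaf, with subsetting preserving earlier gains. The price paid is the explosive growth of $N(n,k)$, but any sufficiently large tower suffices.
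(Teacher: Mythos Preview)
Your pigeonhole step is a genuine gap: you choose $N(n,k)=2^{mk}\cdot T(n,k)$, but $m$ is part of the \emph{input} data, not a parameter you get to fix in advance. The lemma asserts the existence of a bound $N(n,k)$ depending only on $n$ and $k$ that works for \emph{every} family and \emph{every} $m$ satisfying the hypothesis; your bound grows with $m$ and so does not witness the statement. (In the application in Lemma~\ref{iteration lemma}, $N(n,k)$ is used to decide how much branching to keep \emph{before} the relevant $m$ is known from the previous stage, so the uniformity in $m$ really is needed.)

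The fix is simply to drop the pigeonhole step: it is unnecessary. The paper proceeds by induction on $k$, setting $N(n,1)=3^{n}$ and $N(n,k+1)=3^{N(n,k)}$. At the inductive step one applies Lemma~\ref{comb case 1} to the single column $\{f_i^{k}:i\in I\}$ to pass from $|I|=3^{N(n,k)}$ down to $J\subseteq I$ with $|J|=N(n,k)$ and $\{f_i^{k}\restrict l:i\in J,\,l\in\omega\}$ a $2$-tree, and then invokes the inductive hypothesis on the remaining $k$ columns over $J$ to get $S\subseteq J$ of size $n$. The level-$m$ hypothesis is used \emph{only} at the end, to check that the union of the column-$k$ tree with the tree for columns $0,\ldots,k-1$ is still a $2$-tree: if a node at height $a\geq m$ is shared by some $f_i^j$ with $j<k$ and some $f_g^{k}$, then $f_i^j\restrict m=f_g^{k}\restrict m$ forces $f_i^j=f_g^{k}$, collapsing the potential $3$-way split. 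Your iterated application of Lemma~\ref{comb case 1} and the ``subsetting preserves earlier gains'' observation are exactly right; you just invoked the level-$m$ hypothesis in the wrong place, paying an illegitimate $2^{mk}$ for information you can extract for free at the verification stage.
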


\begin{proof}
We will prove this by induction over $k$.

At $k=1$, we need $N(n,1)\geq 3^{n}$ so that we can use Lemma \ref{comb case 1} to be done.

Now, assuming we have the case for $k$ we will prove it for $k+1$. We need $N(n,k+1)\geq 3^{N(n,k)}$, with this we can use Lemma \ref{comb case 1} over $\{f_{i}^{k}: i\in I\}$ to get $J\subseteq I$ such that $|J|=N(n,k)$ and $\{f_{i}^{t}\restrict l: i\in J, t= k, l\in \omega\}$ is a $2$-tree. Now, we can use our induction hypothesis over $\{f_{i}^{t}: i\in J, t\in k\}$ to get $S\subseteq J$ of size $n$ such that $\{f_{i}^{j}\restrict l: i\in S, j\in k, l\in \omega\}$ is a $2$-tree.

We just need to show that \[\{f_{i}^{j}\restrict l: i\in S, j\in k+1, l\in \omega\}=\{f_{i}^{k}\restrict l: i\in S, l\in \omega\}\cup \{f_{i}^{j}\restrict l: i\in S, j\in k, l\in \omega\}\] is a $2$-tree.

Assume that we have $a\in \omega$ and $\langle i, j\rangle, \langle s, t\rangle, \langle g, h\rangle\in S\times (k+1)$  different between them such that $f^{j}_{i}\restrict a=f^{t}_{s}\restrict a=f^{h}_{g}\restrict a$. We have to show that \[|\{f^{j}_{i}\restrict (a+1), f^{t}_{s}\restrict (a+1), f^{h}_{g}\restrict (a+1)\}|\leq 2.\]

Taking into account that $\{f_{i}^{j}\restrict l: i\in I, j\in k+1, l\in m+1\}$, $\{f_{i}^{k}\restrict l: i\in S, l\in \omega\}$ and $\{f_{i}^{j}\restrict l: i\in S, j\in k, l\in \omega\}$ are $2$-trees, the only case left to check is when $a\geq m$ and $j$, $t$ and $h$ are not all the same but at least one of them is equal to $k$. Without lost of generality, assume that $h=k$ and $j\neq k$.

Since $a\geq m$ we have that $f^{k}_{g}\restrict m=f^{j}_{i}\restrict m$. Using the fact that $j\neq k$, and our theorem's hypothesis, we have that $f^{k}_{g}=f^{j}_{i}$, so \[\{|f^{j}_{i}\restrict (a+1), f^{t}_{s}\restrict (a+1), f^{h}_{g}\restrict (a+1)\}|\leq 2.\]

\end{proof}

It is important to remark that in these combinatorial lemmas it is never used that the domain of the functions is $\omega$, so these lemmas are also true for $3^{n}$. 

\begin{definition}
A forcing notion has the $(k+1)^{\omega}$ localization property if and only every function in $(k+1)^{\omega}$ in the generic extension is a branch of a $k$-tree from the ground model.
\end{definition}

\begin{lemma}\label{iteration lemma}
Countable product of the accelerating tree forcing has the $3^{\omega}$ localization property.
\end{lemma}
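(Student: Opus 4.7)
The plan is to mimic the fusion argument sketched in the proof of Lemma \ref{ground lemma} for a single copy of the accelerating tree forcing, but carried out simultaneously across all countably many coordinates, using Lemma \ref{comb iteration} in the role that Lemma \ref{comb case 1} plays there.

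Let $p$ be a condition in the countable-support product with support $\{\alpha_{j}:j<\omega\}$, and let $\dot f$ be a name forced into $3^{\omega}$ by $p$. First I would build a fusion sequence $p = p_{0}\geq p_{1}\geq\cdots$ together with a growing sequence of finite $2$-trees $A_{0}\subseteq A_{1}\subseteq\cdots$ of heights $\ell_{0}<\ell_{1}<\cdots$, enjoying the following at stage $n$: the active coordinate set is $F_{n}=\{\alpha_{0},\ldots,\alpha_{n-1}\}$, and at each $\alpha\in F_{n}$ the accelerating tree component of $p_{n}(\alpha)$ has been pruned so that every branch admits exactly $n$ designated splits above the stem. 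Moreover, for every way of selecting an $n$-th designated split node at each $\alpha\in F_{n}$, the resulting combined condition decides $\dot f\restrict\ell_{n}$ to some element of $A_{n}$.

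The induction step $n\to n+1$ is the main combinatorial move. I would add the coordinate $\alpha_{n}$ to the active set, and at each of the $n+1$ active coordinates look for an $(n+1)$-st designated split whose width is at least $N(n+1,n+1)$; such a split exists because accelerating trees have ever-widening splits, so it can be taken far enough up the tree to absorb this requirement. Below each tuple of continuations across the $n+1$ active coordinates, I would extend to decide $\dot f\restrict\ell_{n+1}$. The collected data then matches the hypothesis of Lemma \ref{comb iteration}: the coordinate index plays the role of $j\in k+1$, the continuation index plays the role of $i\in I$, the previous level $m=\ell_{n}$ supplies the initial $2$-tree structure $A_{n}$, and the coherence hypothesis on level-$m$ agreement can be arranged uniformly by controlling which node of $A_{n}$ each tuple extends. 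Applying Lemma \ref{comb iteration} yields a sub-family of size $n+1$ whose decided $\dot f$-values extend $A_{n}$ to a $2$-tree $A_{n+1}$. Pruning each coordinate tree of $p_{n}$ to retain only the continuations used by the surviving tuples then produces $p_{n+1}$.

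To finish, I would take $p_{\omega}$ to be the fusion limit, with $p_{\omega}(\alpha_{j}) = \bigcap_{n>j} p_{n}(\alpha_{j})$ at each coordinate in the support. Standard bookkeeping, analogous to fusion for countable-support products of Sacks-type tree forcings, shows that each such intersection is still an accelerating tree, since every retained branch continues to admit splits and the acceleration widths grow more permissively than the combinatorial loss incurred at each stage demands. Thus $p_{\omega}$ is a legitimate condition below $p$, and by construction $p_{\omega}\Vdash\dot f\in[A]$, where $A:=\bigcup_{n}A_{n}$ is a ground-model $2$-tree. The hard part will be carrying out the bookkeeping uniformly enough to secure the coherence hypothesis of Lemma \ref{comb iteration} at each inductive step, and verifying that choosing the designated splits sufficiently high up each tree absorbs the combinatorial loss without destroying the accelerating-tree property in the fusion limit.
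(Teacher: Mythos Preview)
Your outline captures the fusion scaffolding correctly, but there is a genuine gap at the place you yourself flag as ``the hard part.'' The coherence hypothesis of Lemma \ref{comb iteration} --- that whenever $f_{i}^{j}\restrict m=f_{s}^{t}\restrict m$ with $t\neq j$ one has $f_{i}^{j}=f_{s}^{t}$ on the full domain --- is \emph{not} something you can secure ``by controlling which node of $A_{n}$ each tuple extends.'' What it demands is that two tuples which agreed on $\dot f\restrict \ell_{n}$ cannot be forced apart at any later level $\ell_{n+1}$; but which value of $\dot f\restrict\ell_{n+1}$ a tuple decides is dictated by the name, not by your choice of bookkeeping. The paper handles this with a maximization trick: for $p\leq_{F,\nu}p_{0}$ and $M>m$, let $z_{p,M}$ count the number of distinct values of $\dot f\restrict M$ decided by the various tuples; since this is bounded, one can pass to $p^{+}$ and $M'$ where $z_{p^{+},M'}$ is maximal. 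Then any two tuples that force the same value of $\dot f\restrict M'$ must continue to agree at every further level (otherwise the count would increase), which is exactly the coherence required. Without this step the induction breaks: the resulting $A_{n+1}$ need not be a $2$-tree at heights beyond $\ell_{n}$.

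A second, smaller issue is the indexing in your application of Lemma \ref{comb iteration}. The paper does not grow all coordinates in $F_{n}$ by one split simultaneously; it fixes a single coordinate $\beta$ and passes from $\eta$ to $\nu$ with $\nu(\beta)=\eta(\beta)+1$ and $\nu=\eta$ elsewhere, repeating $|F|$ times to reach $\eta+1$. In that step, the role of $j\in k$ in Lemma \ref{comb iteration} is played not by a coordinate index but by a tuple $\sigma\in\prod_{\alpha\in F}(T(\alpha))^{\eta(\alpha)}$ (so $k$ is the number of such tuples), while $i\in I$ with $|I|=N(n,k)$ indexes the new continuations at coordinate $\beta$ alone. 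Your identification ``coordinate index $\mapsto j$, continuation index $\mapsto i$'' does not match the lemma's hypotheses, and attempting all coordinates at once would require a multi-index version of Lemma \ref{comb iteration} that you have not stated.
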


Newelski and Roslanowski, in \cite{newelski1993ideal}, define the $k$-localization property as the fact that all branches of $\omega^{\omega}$ are cover by a $k$-tree of the ground model. This property was deeply study later by Roslanowski, in \cite{roslanowski2006n}, and by Zapletal, in \cite{zapletal2008n}. They found that the $k$-localization property is preserved under most of the used countable support product and iteration of proper forcings.

Our forcing does not have the $2$-localization property, it will have a version of that for $3^{\omega}$: the $3^{\omega}$ localization property. Our proof will resemble the one did by Newelski and Roslanowski, nevertheless, it is possible that there are results in the lines of the other two papers.

\begin{proof}

First, given a tree and $n>0$, we let $(T)^{n}$ be the set of all nodes such that they are the successors of the $n$-th split. As a convention, given $p=\langle s, T\rangle$ a forcing condition, we have that $(T)^{0}=\{s\}$. Now, given elements of the accelerating tree forcing we will define for $n\geq 1$, $p=\langle s, T\rangle\leq_{n} p'=\langle s', T'\rangle$ if and only if $\langle s, T\rangle\leq \langle s', T'\rangle$ and $(T')^{k}=(T)^{k}$, for all $1\leq k \leq n$, and $p\leq_{0} p'$ if and only if $p\leq p'$. Notice that, since these are subtrees of $\displaystyle \bigcup_{m\in \omega}\prod_{n\in m}^{m}(n+1)$, these orders have the fusion property and satisfy Axiom A (as in \cite{baumgartner1983iterated}).

Assume that we have a countable support product of the accelerating tree forcing of length $\kappa$. Call the final partial order $\mathds{P}_{\kappa}$, as notation we will express $q\in \mathds{P}_{\kappa}$ as $q=\langle r, T\rangle$ and $q(\alpha)=\langle r(\alpha), T(\alpha)\rangle$.

Given $F\in [\kappa]^{<\omega}$ and $\eta:F\rightarrow \omega$, we define $p\leq_{F, \eta}q$ if and only if $p\leq q$ and for all $\alpha\in F$ we have that $p(\alpha)\leq_{\eta(\alpha)}q(\alpha)$. Furthermore, given $\sigma\in \prod_{\alpha\in F}T(\alpha)$ and $p\in \mathds{P}_{\kappa}$ we define $p\ast \sigma$ to be $p(\beta)$ if $\beta\notin F$ and $p(\beta)\ast \sigma(\beta)=\langle \sigma(\beta), T_{\sigma(\beta)} \rangle$ if $\beta\in F$ (following the notation of Lemma \ref{ground lemma}).

The orders $\leq_{F, \eta}$ have the fusion property under the following conditions: given $p_{n+1}\leq_{F_{n}, \eta_{n}} p_{n}$ with $\bigcup_{n\in \omega}F_{n}=\bigcup_{n\in \omega} supp(p_{n})$ and $\lim_{n\rightarrow \infty}\eta_{n}(\alpha)=\infty$ for all $\alpha\in \bigcup_{n\in \omega}F_{n}$ we have that there exist $q\in \mathds{P}_{\kappa}$ such that $q\leq_{F_{n}, \eta_{n}} p_{n}$ for all $n\in \omega$.

In order to complete the proof, it is enough to define the following concept and show the following claim:

\begin{definition}
Given $\Vdash_{\mathds{P}} \mbox{``} \dot f \in 3^{\omega}\mbox{''}$. We say that the 5-tuple $\langle q, F, \eta, m, A \rangle$ \textit{consolidates} $\dot f$ if and only if the following is satisfied:
\begin{enumerate}
\item $q=\langle r, T \rangle\in \mathds{P}_{\kappa}$, $F\in [\kappa]^{<\omega}$, $\eta:F\rightarrow \omega$, $m\in \omega$.
\item $A\subseteq 3^{<m+1}$ is a $2$-tree, $q\Vdash ``\dot f\restrict m \in A"$.
\item For each $\sigma\in \prod_{\alpha\in F}(T(\alpha))^{\eta(\alpha)}$ there is $g\in A$ such that $q\ast \sigma\Vdash ``\dot f\restrict m=g\mbox{''}$.
\item If there are a condition $q^{\ast}\leq _{F, \eta} q$, $M\in \omega$, $h\in 3^{M}$ and $\sigma_{1}\neq \sigma_{2}\in \prod_{\alpha\in F}(T(\alpha))^{\eta(\alpha)}$ such that $q^{\ast}\ast \sigma_{1}\Vdash ``\dot f\restrict M= h \mbox{''}$ but $q^{\ast}\ast \sigma_{2}\Vdash ``\dot f\restrict M\neq h\mbox{''}$ then there is $g\in A$ such that $q\ast\sigma_{1}\Vdash ``\dot f\restrict m=g\mbox{''}$ and $q\ast\sigma_{2}\Vdash ``\dot f\restrict m\neq g\mbox{''}$.
\end{enumerate}

\end{definition}

\begin{claim}\label{iteration claim}
Working in $V$, suppose that $\Vdash_{\mathds{P}} \mbox{``} \dot f \in 3^{\omega}\mbox{''}$ and $\langle q, F, \eta, m, A\rangle$ that consolidates $\dot f$. Then there are $M'>m$, $A'\subset 3^{<M'+1}$ a $2$-tree with $A= A'\cap 3^{<m+1}$ and $q'=\langle r', T'\rangle \leq_{F, \eta}q$ such that $\langle q', F, \eta+1, M, A'\rangle$ also consolidates $f$.
\end{claim}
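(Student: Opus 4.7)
My plan is to construct $q'$ by a finite recursion along an enumeration $\{\sigma_0,\ldots,\sigma_{N-1}\}$ of the finite set $\prod_{\alpha\in F}(T(\alpha))^{\eta(\alpha)}$, at each stage pushing above the next split in every coordinate of $F$, making decisions about $\dot f$, and then pruning with Lemma \ref{comb iteration} so that those decisions fit into a $2$-tree extending $A$. The acceleration condition is what creates the needed slack: to produce the $(\eta(\alpha){+}1)$-st split in $T'(\alpha)$ we are entitled to let the split have any number of successors $\geq\eta(\alpha)+2$, and by first extending without splitting we can make the split as branch-rich as the combinatorial bound of Lemma \ref{comb iteration} requires.

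At stage $i$ I modify $q^{(i)}$ only along the branches of the trees $T^{(i)}(\alpha)$ passing through $\sigma_i(\alpha)$, manufacturing in each coordinate $\alpha\in F$ a split with $K_\alpha$ successors, where each $K_\alpha$ is at least the bound $N(\eta(\alpha)+2,k)$ of Lemma \ref{comb iteration}, with $k$ encoding the remaining coordinates of $F$ together with the columns already fixed by stages $0,\ldots,i-1$. Writing $D_i=\prod_{\alpha\in F}\mathrm{succ}_\alpha^{(i)}(\sigma_i(\alpha))$, I then find an $M^{(i)}>m$ and, for every $\sigma'\in D_i$, an $h_{\sigma'}\in 3^{M^{(i)}}$ and an extension $p_{\sigma'}\leq q^{(i)}\ast\sigma'$ forcing $\dot f\restrict M^{(i)}=h_{\sigma'}$. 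In doing so I saturate: any $\leq_{F,\eta+1}$-refinement below $q^{(i)}\ast\sigma'$ that could force some other value of $\dot f(n)$ with $n<M^{(i)}$ is preempted by first strengthening $p_{\sigma'}$, so that the $h_{\sigma'}$'s record every distinction below $q^{(i)}$ that clause (4) of the new consolidation will demand. Lemma \ref{comb iteration} then lets me prune each factor $\mathrm{succ}_\alpha^{(i)}(\sigma_i(\alpha))$ to at least $\eta(\alpha)+2$ elements so that the restricted family $\{h_{\sigma'}:\sigma'\in D_i^{\ast}\}$ extends $A$ and the commitments of earlier stages into a common $2$-tree; its compatibility hypothesis holds because clause (4) of the old consolidation together with the saturation above forces any two ``columns'' agreeing at level $m$ to agree throughout.

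Setting $q'=q^{(N)}$, $M'=\max_i M^{(i)}$, and $A'=A\cup\{h_{\sigma'}\restrict\ell:\sigma'\text{ chosen at some stage},\ \ell\leq M'\}$, clauses (1)--(3) of consolidation at level $\langle\eta+1,M'\rangle$ are immediate from the construction and clause (4) from the saturation step. The main obstacle I expect is the interaction between stages when distinct $\sigma_i,\sigma_j$ share a coordinate value $\sigma_i(\alpha)=\sigma_j(\alpha)$: the splitting structure that stage $i$ installs above that node must be respected at stage $j$. I will handle this by ordering the enumeration so that the first stage touching any given coordinate node of $(T(\alpha))^{\eta(\alpha)}$ fixes the splitting once and for all, and later stages only refine the $h$-decisions. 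Lemma \ref{comb iteration}, as opposed to the single-column Lemma \ref{comb case 1} used in the sketch of Lemma \ref{ground lemma}, is exactly the tool that permits these later refinements without breaching the acceleration bound.
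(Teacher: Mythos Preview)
Your overall plan is the right one, but there are two places where the proposal does not yet close, and the paper's argument is organized precisely so as to avoid them. First, the paper does \emph{not} try to increase $\eta$ on all of $F$ at once. It fixes a single coordinate $\beta\in F$, sets $\nu=\eta$ except $\nu(\beta)=\eta(\beta)+1$, and produces $q'$ with $\langle q',F,\nu,M',A'\rangle$ consolidating $\dot f$; iterating over the finitely many $\beta\in F$ then gives $\eta+1$. This single-coordinate reduction is what makes Lemma~\ref{comb iteration} apply cleanly: the index set $I$ is the set of $\nu(\beta)$-level successors in the $\beta$-tree, and the ``columns'' $j\in k$ are the elements of $\prod_{\alpha\in F}(T(\alpha))^{\eta(\alpha)}$. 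Pruning $I$ to $S$ then touches only $T(\beta)$. In your scheme you must prune a product $\prod_{\alpha\in F}K_\alpha$ down to another product, coordinate by coordinate, and you must do so consistently across all $\sigma_i$ that share an $\alpha$-value; you flag this as ``the main obstacle'' and propose to freeze the split the first time a node is visited, but then at later stages you can no longer prune that coordinate at all, and it is not explained what plays the role of $I$ in the invocation of Lemma~\ref{comb iteration} there. The one-coordinate-at-a-time reduction simply makes this disappear.

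Second, and more seriously, your ``saturation'' step does not secure clause~(4). What (4) demands for the new tuple is a statement about \emph{pairs} $\sigma'_1\neq\sigma'_2$: if some $q^{\ast}\leq_{F,\eta+1}q'$ can separate them at any future level, then $A'$ already separates them at level $M'$. Your description (``any $\leq_{F,\eta+1}$-refinement below $q^{(i)}\ast\sigma'$ that could force some other value \ldots\ is preempted by strengthening $p_{\sigma'}$'') is a statement about a single $\sigma'$ and does not prevent the situation $h_{\sigma'_1}=h_{\sigma'_2}$ while a deeper refinement splits them. The paper handles this with a short maximality argument: since the number $z_{p,M}$ of distinct forced values is bounded by $k\cdot N(n,k)$, one can choose $p^{+}\leq_{F,\nu}p_0$ and $M'$ with $z_{p^{+},M'}$ maximal; then (Observation~C) any later separation at some $M''\geq M'$ would strictly increase $z$, a contradiction, so all future separations are already visible at $M'$. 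This bounded-maximum trick is the missing ingredient; once you insert it (and adopt the single-$\beta$ reduction), the different $M^{(i)}$'s and the stage-interaction problem evaporate, because there is a single global $M'$ and a single application of Lemma~\ref{comb iteration}.
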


If we prove this claim, given $p\in \mathds{P}_{\kappa}$ such that $p\Vdash ``\dot f\in 3^{\omega} "$ we can define $q_{n}$, $F_{n}$, $\eta_{n}$, $A_{n}$, $m_{n}$ as follows:

\begin{enumerate}
\item $q_{0}=p$, $A_{0}=\{\emptyset\}$ and $m_0=0$.
\item We write $supp(q_{0})=\{\alpha_{0}^{i}: i\in \omega\}$ and let $F_{0}=\{\alpha_{0}^{0}\}$.
\item We let $\eta_{0}(\alpha_{0}^{0})=0$. Clearly, $\langle q_{0}, F_{0}, \eta_{0}, m_{0}, A_{0}\rangle$ consolidates $\dot f$.
\item We define $q_{n+1}$, $A_{n+1}$ and $m_{n+1}$ as the result of the claim using $q_{n}$, $A_{n}$, $F_{n}$, $\eta_{n}$ and $m_{n}$.
\item We write $supp(q_{n+1})=\{\alpha_{n+1}^{i}: i\in \omega\}$ and let $F_{n+1}=F_{n}\cup\{\alpha_{i_{n}}^{j_{n}}\}$ with $\langle i_{n}, j_{n}\rangle$ following the usual enumeration of $\omega\times \omega$.
\item Finally, we let $\eta_{n+1}(\alpha)=\eta_{n}(\alpha)+1$ for $\alpha\in F_{n}$ and $\eta_{n+1}(\alpha_{i_{n}}^{j_{n}})=0$. Again, notice that $\langle q_{n+1}, F_{n+1}, \eta_{n+1}, m_{n+1}, A_{n+1}\rangle$ consolidates $\dot f$.
\end{enumerate}

With this, we can use the fusion property with $q_{n+1}\leq_{F_{n}, \eta_{n}} q_{n}$ and get $q\in \mathds{P}_{\kappa}$ such that $q\leq_{F_{n}, \eta_{n}} q_{n}$ for all $n$ so we have that $q\Vdash \dot ``\dot f\in [\bigcup_{n\in \omega}A_{n}]\mbox{''}$.

This shows that all the functions in $3^{\omega}$ in the extension are a branch of a ground model $2$-tree.

It is important to notice that the properties gave to the $2$-tree in the above definition and claim aligns with those in the hypothesis of Lemma \ref{comb iteration}. We will use this lemma in the proof. In order to do that, we need a couple of observations and reductions.

Below we assume that  $\langle q, F, \eta, m, A\rangle$  consolidates $\dot f$ and we fix $\beta\in F$. Let $\nu:F\rightarrow \omega$ such that $\nu(\alpha)=\eta(\alpha)$ if $\alpha\neq \beta$ and $\nu(\beta)=\eta(\beta)+1$. To show Claim \ref{iteration claim} we will look for $q'\leq_{F, \eta} q$ such that $ \langle q', F, \nu, M', A'\rangle$ consolidates $\dot f$ (instead of $ \langle q', F, \eta+1, M', A'\rangle$). This is enough since, changing the $\beta$ we are using, we can go from $\eta$ to $\eta+1$ using $|F|$ intermediate $\nu$ functions.

Let $n=\nu(\beta)+2$ and we let $k=\left|\prod_{\alpha\in F} (T(\alpha))^{\eta(\alpha)}\right|$. We will also use $N(n,k)$ as defined in Lemma \ref{comb iteration}.

Notice that, pruning the trees of $q$ if necessary, we can find $p_0=\langle r_{0}, T_{0}\rangle\leq_{F, \eta}q$ such that for each $t\in (T_0(\beta))^{\eta(\beta)}=(T(\beta))^{\eta(\beta)}$ we have that \[|\{s\in (T_0(\beta))^{\nu(\beta)}: s \mbox{ extends } t\}|=|\{s\in (T_0(\beta))^{\eta(\beta)+1}: s \mbox{ extends } t\}|=N(n,k).\] 
It is important to remark that, in general, $p_0\not\leq_{F,\nu} q$.

\textbf{Observation A} \emph{Suppose $M\in \omega$ and $\sigma\in \prod_{\alpha\in F} (T_0(\alpha))^{\nu(\alpha)}$. Then there exists $q^{\ast}\in \mathds{P}_{\kappa}$ such that $q^{\ast}\leq_{F, \nu} p_0$ and $q^{\ast}\ast \sigma$ forces a value to $\dot f\restrict M$.}

\begin{proof} Standard, see Lemma 1.7 of \cite{baumgartner1985sacks} \end{proof}

\textbf{Observation B} \emph{For $M\in \omega$ there exists there exists $q^{\ast}\in \mathds{P}_{\kappa}$,  $q^{\ast}\leq_{F, \nu} p_0$, such that for every $\sigma\in \prod_{\alpha\in F} (T_0(\alpha))^{\nu(\alpha)}$ we have that $q^{\ast}\ast \sigma$ forces a value to $\dot f\restrict M$.}

\begin{proof} Standard, see Corollary 1.10 of \cite{baumgartner1985sacks} \end{proof}

Now, given $p\leq_{F, \nu} p_0$ and $M> m$ we define
\[z_{p,M}=|\{a\in 3^{M}: \exists \sigma\in \prod_{\alpha\in F} (T(\alpha))^{\nu(\alpha)} (p\ast \sigma\Vdash \dot f\restrict M= a) \}|.\] Notice that $z_{p,M}\leq k\cdot N(n,k)$. Therefore, we can find  $p^{+}=\langle r^{+}, T^{+}\rangle \leq_{F, \eta} p_0$ and $M'> m$ such that $z_{p^{+}, M'}$ has maximum value.

Passing to a $\leq_{F, \nu}$ condition we may also demand that for every $\sigma\in \prod_{\alpha\in F} (T^{+}(\alpha))^{\nu(\alpha)}=\prod_{\alpha\in F} (T_0(\alpha))^{\nu(\alpha)}$, the condition $p^{+}\ast \sigma$ forces a value to $\dot f\restrict M'$.

\textbf{Observation C} \emph{Suppose that $\sigma_0, \sigma_{1}\in \prod_{\alpha\in F} (T^{+}(\alpha))^{\nu(\alpha)}$, $M''\geq M'$, $p\leq_{F, \nu} p^{+}$ and $a_{0}, a_{1}\in 3^{M''}$. If $a_{0}\neq a_{1}$ and \[p \ast \sigma_{0}\Vdash \dot f\restrict M''= a_0 \mbox{ and } p\ast \sigma_{1}\Vdash \dot f\restrict M''= a_1\] then $a_0\restrict M'\neq a_1\restrict M'$.}

\begin{proof}Suppose towards a contradiction that $a_0\restrict M'\neq a_1\restrict M'$. We can find $p''\leq_{F, \nu} p \leq_{F, \nu} p^{+}\leq_{F, \nu} p_{0}$ such that for every $\sigma\in \prod_{\alpha\in F} (T^{+}(\alpha))^{\nu(\alpha)}$, the condition $p''\ast \sigma$ forces a value to $\dot f\restrict M''$. Then, $z_{p'',M''}>z_{p^{+}, M'}$, a contradiction.\end{proof}

For every $\sigma \in \prod_{\alpha\in F}(T^{+}(\alpha))^{\eta(\alpha)}=\prod_{\alpha\in F} (T(\alpha))^{\eta(\alpha)}$ there are $N(n,k)$ many $\rho^{\sigma}\in \prod_{\alpha\in F}(T^{+}(\alpha))^{\nu(\alpha)}=\prod_{\alpha\in F} (T_0(\alpha))^{\nu(\alpha)}$ such that for all $\alpha\in F\setminus\{\beta\}$ we have that $\sigma(\alpha)=\rho^{\sigma}(\alpha)$. Fix an enumeration of these $\rho$ and define $\sigma^{\frown} i=\rho^{\sigma}_{i}$ (if we have $\sigma_{1}(\beta)=\sigma_{2}(\beta)$ then $\rho^{\sigma_{1}}_{i}=\rho^{\sigma_{2}}_{i}$ for all $i$).

Given $\sigma \in \prod_{\alpha\in F}(T^{+}(\alpha))^{\eta(\alpha)}$ and $i\in N(n,k)$, define $f_{i}^{\sigma}\in 3^{M'}$ to be such that $p^{+}\ast \sigma^{\frown}i \Vdash ``\dot f\restrict M'=f_{i}^{\sigma}\mbox{''}$. Since  $\langle q, F, \eta, m, A\rangle$ consolidates $\dot f$, we have that $\{f_{i}^{\sigma}\restrict l: i\in N(n,k), t\in \prod_{\alpha\in F}(T^{+}(\alpha))^{\eta(\alpha)}, l\in m+1\}\subseteq A$ is a $2$-tree such that if $f_{i}^{\sigma_{1}}\restrict m=f_{j}^{\sigma_{2}}\restrict m$ with $\sigma_{1}\neq \sigma_{2}$ we have that $f_{i}^{\sigma_{1}}=f_{s}^{\sigma_{2}}$.

Furthermore, using observation C, we have that given $\sigma_{1}, \sigma_{2}$ elements of $\prod_{\alpha\in F}(T^{+}(\alpha))^{\eta(\alpha)}$, $i,j\in N(n,k)$ with $\sigma_{1}\neq \sigma_{2}$ we have that if $f_{i}^{\sigma_{1}}\restrict M'=f_{j}^{\sigma_{2}}\restrict M'$ with $\sigma_{1}\neq \sigma_{2}$ we have that $f_{i}^{\sigma_{1}}=f_{s}^{\sigma_{2}}$. With this, any $2$-tree that comes from $\{f_{i}^{\sigma}: i\in N(n,k), \sigma\in\prod_{\alpha\in F}(T^{+}(\alpha))^{\eta(\alpha)}\}$ will satisfy the requirements of the claim.

Now we can use Lemma \ref{comb iteration} on $\{f_{i}^{\sigma}: i\in N(n,k), \sigma\in\prod_{\alpha\in F}(T^{+}(\alpha))^{\eta(\alpha)}\}$ so we can find $S\subseteq N(n,k)$ of size $n$ such that \[\{f_{i}^{\sigma}: i\in S, \sigma\in\prod_{\alpha\in F}(T^{+}(\alpha))^{\eta(\alpha)}\}\] is a $2$-tree.

To complete the claim, we use:
\begin{itemize}
    \item $M'$,
    \item $A'=\{f_{i}^{t}: i\in S, t\in k\}\cup A$ and
    \item $q'\leq_{F,\eta} q$ define as $q'(\alpha)=p^{+}(\alpha)$ for all $\alpha\neq \beta$ and $q'(\beta)=\langle r'(\beta), T'(\beta)\rangle $ where $r'(\beta)=r^{+}(\beta)$ and $T'(\beta)$ is an accelerating subtree of $T^{+}(\beta)$ such that \[(T'(\beta))^{\nu(\beta)}=\{\rho^{\sigma}_{i}(\beta): \sigma\in\prod_{\alpha\in F}(T^{+}(\alpha))^{\eta(\alpha)}, i\in S \}.\]\end{itemize}\end{proof}

\begin{corollary}
Countable support product of accelerating tree forcing has the $(k+1)^{\omega}$ localization property for all $k\geq 2$.
\end{corollary}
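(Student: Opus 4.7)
The plan is to mirror the proof of Lemma~\ref{iteration lemma} with $3$ replaced by $k+1$ and $2$ replaced by $k$ throughout, for each fixed $k\geq 2$. Since the case $k=2$ is Lemma~\ref{iteration lemma} itself, the task reduces to verifying that each ingredient of that proof admits the analogous generalization. Crucially, the accelerating tree forcing itself and the entire Axiom~A/fusion apparatus are defined without any reference to the base $3$, so those parts transfer unchanged; only the combinatorial side needs inspection.

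The first step is to establish the natural analog of Lemma~\ref{comb case 1}: given $\{f_i : i\in I\}\subseteq (k+1)^{\omega}$ with $|I|=(k+1)^n$, there exists $S\subseteq I$ with $|S|=n$ such that $\{f_i\restrict m : i\in S,\, m\in \omega\}$ is a $k$-tree. The inductive proof of Lemma~\ref{comb case 1} transfers verbatim: locate the least coordinate $m$ at which not all $f_i$ agree, apply pigeonhole on the $k+1$ possible values at position $m$ to obtain a homogeneous subset $J\subseteq I$ with $|J|\geq (k+1)^{n-1}$, apply the inductive hypothesis to $J$, and adjoin one witness $i_0\in I\setminus J$ whose value at $m$ differs. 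The resulting tree has a single binary split at level $m$ and inherits the $k$-tree structure of the inductive subtree above, so it is a $k$-tree. Using this, I would package the corresponding analog of Lemma~\ref{comb iteration} with a bound $N(n,\ell)$ satisfying $N(n,\ell+1)\geq (k+1)^{N(n,\ell)}$; the induction on $\ell$ is identical, the only new point being that the ``collapse'' hypothesis ensures the $k$-tree structure is preserved when joining the $\ell$-th batch to the previous ones.

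With these combinatorial tools in hand, I would transcribe the proof of Lemma~\ref{iteration lemma} step by step, modifying the definition of \emph{consolidates} so that $\dot f$ lives in $(k+1)^{\omega}$ and $A\subseteq (k+1)^{<m+1}$ is a $k$-tree, and similarly for Claim~\ref{iteration claim}. The countable-support fusion setup, the choice of $p_0$ refining $(T(\beta))^{\nu(\beta)}$ to exactly $N(n,\ell)$ extensions over each node, Observations~A--C, and the counting $z_{p,M}\leq \ell\cdot N(n,\ell)$ all use only generic forcing facts and do not depend on the specific base. At the culmination, the generalized combinatorial lemma replaces Lemma~\ref{comb iteration} to extract the subset $S$ yielding the desired $k$-tree $A'$, and the construction of $q'$ from $p^{+}$ is identical. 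I expect the main obstacle to be purely bookkeeping---carefully tracking the parameters and verifying each quantitative bound with $3$ and $2$ uniformly replaced by $k+1$ and $k$---but I anticipate no conceptual difficulty.
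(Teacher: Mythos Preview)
Your approach is correct, but it differs markedly from the paper's. The paper does not rerun the fusion argument with new parameters; instead it observes that the $3^{\omega}$ localization property already implies the $(k+1)^{\omega}$ localization property for every $k\geq 2$, via a one-line reduction. Namely, fix a surjection $f:(k+1)\to 3$ and let $f^{\ast}:(k+1)^{\omega}\to 3^{\omega}$ be the induced coordinatewise map. If $g\in (k+1)^{\omega}$ lies in the extension, then $f^{\ast}(g)\in 3^{\omega}$ is a branch of some ground-model $2$-tree $T$ by Lemma~\ref{iteration lemma}; the preimage $(f^{\ast})^{-1}[T]$ is then a ground-model $k$-tree containing $g$ (surjectivity of $f$ guarantees each node has at most $k$ successors, since the complement of any fiber has size at most $k$). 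Thus the Corollary follows immediately from the case $k=2$.

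Your route---generalizing Lemmas~\ref{comb case 1} and~\ref{comb iteration} to base $k+1$ and then transcribing the proof of Lemma~\ref{iteration lemma}---is perfectly valid, and indeed the paper itself invokes exactly this generalization later (Lemma~\ref{comb iteration 2} and Lemma~\ref{iteration lemma 2}) when it needs to mix accelerating tree forcing with $k$-branching tree forcing, a setting where the simple surjection trick is not available. So your argument is not wasted effort; it just proves more than is needed here. For the Corollary as stated, the paper's reduction is both shorter and conceptually cleaner, since it isolates the monotonicity of the $(k+1)^{\omega}$ localization property in $k$ as a standalone fact independent of the particular forcing.
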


\begin{proof}
To prove this, it is enough to show that the $(k+1)^{\omega}$ localization property is implied by the $(s+1)^{\omega}$ localization property for $k\geq s\geq 2$, then, the result is a corollary of Lemma \ref{iteration lemma}.

Fix a surjective function $f:(k+1)\rightarrow (s+1)$. Notice that this function induces a surjective function $f^{\ast}:(k+1)^{\omega}\rightarrow (s+1)^{\omega}$. Now, working in a generic extension given a $s$-tree $T$ from the ground model, $(f^{\ast})^{-1}[T]$ is a $k$-tree from the ground model.

Therefore, if in the generic extension $(s+1)^{\omega}$ is covered by $s$-trees from the ground model, then $(k+1)^{\omega}$ is covered by $k$-trees from the ground model.

\end{proof}

Now, the following definition can let us expand our last result a little more.

\begin{definition}
Forcing with $k$-branching trees of $k^{<\omega}$ is the forcing notion that uses subtrees of $k^{<\omega}$ such that every node has either $1$ or $k$ successors.
\end{definition}

This forcing is used in \cite{newelski1993ideal} where Newelski and Roslanowski showed that this forcing has the $k$-localization property, i.e., that every function of $\omega^{\omega}$ in the generic extension is the branch of a $k$-tree from the ground model. Notice that this property implies the $(k+1)^{\omega}$ localization property. A first step in order to investigate if the countable support products of forcings with the $(k+1)^{\omega}$ localization property still has the $(k+1)^{\omega}$ localization is true for a bigger spectrum of forcings than the accelerating tree forcing is to show the following lemmas, that are analogues of Lemma \ref{comb iteration} and \ref{iteration lemma}:

\begin{lemma}\label{comb iteration 2}
Given $\{f_{i}^{j}: i\in I, j\in a\}\subseteq (k+1)^{\omega}$ with $a\in \omega$, $|I|=N(n,a)$ a big enough number and $m\in \omega$ that makes $\{f_{i}^{j}\restrict l: i\in I, j\in a, l\in m+1\}$ a $k$-tree such that if $f_{i}^{j}\restrict m=f_{s}^{t}\restrict m$ with $t\neq j$ we have that $f_{i}^{j}=f_{s}^{t} $ then you can find $S\subseteq I$ with $|S|=n$ such that $\{f_{i}^{j}: i\in S, j\in l\}$ is a $k$-tree.
\end{lemma}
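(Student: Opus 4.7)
The plan is to mimic the proof of Lemma \ref{comb iteration} with the appropriate generalizations from $3^{\omega}$ and $2$-trees to $(k+1)^{\omega}$ and $k$-trees. The first step is to prove a $(k+1)^{\omega}$ analogue of Lemma \ref{comb case 1}: given $\{g_i : i\in I\}\subseteq (k+1)^{\omega}$ with $|I|=(k+1)^n$, there is $S\subseteq I$ of size $n$ such that $\{g_i\restrict l : i\in S,\, l\in \omega\}$ is a $k$-tree. The argument is the same induction on $n$: at the inductive step, either all functions agree and we take $n+1$ copies, or we locate the first split level $m$, use pigeonhole on the $k+1$ possible values at position $m$ to extract $(k+1)^n$ functions sharing a common value, apply the induction hypothesis inside that block, and append a single additional function with a different value at position $m$. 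The resulting tree has two successors at the split (which is fine since $k\geq 2$) and otherwise inherits the $k$-tree structure either from the induction or from the single chain contributed by the extra function.

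With that analogue in hand, Lemma \ref{comb iteration 2} is proved by induction on $a$. For the base case $a=1$ take $N(n,1)\geq (k+1)^n$ and apply the generalized Lemma \ref{comb case 1} directly. For the inductive step take $N(n,a+1)\geq (k+1)^{N(n,a)}$, apply the generalized Lemma \ref{comb case 1} to the top layer $\{f_i^a : i\in I\}$ to extract $J\subseteq I$ of size $N(n,a)$ with $\{f_i^a\restrict l : i\in J,\, l\in \omega\}$ a $k$-tree, and then apply the induction hypothesis to $\{f_i^t : i\in J,\, t\in a\}$ (whose cross-layer hypothesis is inherited from the one for $I$) to obtain $S\subseteq J$ of size $n$ making $\{f_i^t\restrict l : i\in S,\, t\in a,\, l\in \omega\}$ a $k$-tree.

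The main obstacle is verifying that the combined set $\{f_i^j\restrict l : i\in S,\, j\in a+1,\, l\in \omega\}$ really is a $k$-tree, and this is where the cross-layer hypothesis ``if $f_i^j\restrict m=f_s^t\restrict m$ with $t\neq j$ then $f_i^j=f_s^t$'' is essential. For a node $\tau$ of length at most $m$, the bound on the number of successors is inherited from the ambient $k$-tree given in the hypothesis. For a node $\tau$ of length greater than $m$, every function passing through $\tau$ has $\tau\restrict m$ as its length-$m$ initial segment, so the cross-layer hypothesis forces any two functions in different layers passing through $\tau$ to be equal as elements of $(k+1)^{\omega}$. Consequently, if two or more distinct layers contain a branch through $\tau$, a short chase of equalities shows that \emph{all} branches through $\tau$ coincide as functions and contribute a single common successor; while if only one layer contains a branch through $\tau$, the successors come from the $k$-tree structure already established within that layer. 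In both cases the combined set has at most $k$ successors at $\tau$, which closes the argument.
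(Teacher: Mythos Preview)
Your proposal is correct and follows exactly the approach the paper intends: the paper's own proof simply says ``This follows from the proofs of Lemma~\ref{comb case 1} and Lemma~\ref{comb iteration}, in those lemmas we had $k=2$; the same reasoning will give us this lemma,'' and you have faithfully spelled out that reasoning with the obvious substitutions of $(k+1)$ for $3$ and $k$-trees for $2$-trees. Your verification of the $k$-tree property for the combined set is in fact slightly cleaner than the case analysis in Lemma~\ref{comb iteration}, since your observation that above level $m$ any two branches from distinct layers must coincide immediately handles all $k\geq 2$ at once.
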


\begin{proof}
This follows from the proofs of Lemma \ref{comb case 1} and Lemma \ref{comb iteration}, in those lemmas we had $k=2$. The same reasoning will give us this lemma.
\end{proof}

\begin{lemma}\label{iteration lemma 2}
Countable support product of alternating accelerating tree forcing and forcing with $k$-branching trees of $k^{<\omega}$ has the $(k+1)^{\omega}$ localization property.
\end{lemma}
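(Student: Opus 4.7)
The plan is to mirror the proof of Lemma \ref{iteration lemma} line for line, replacing $3^{\omega}$ by $(k+1)^{\omega}$, $2$-trees by $k$-trees, and Lemma \ref{comb iteration} by its generalization Lemma \ref{comb iteration 2}. The ambient product $\mathds{P}_{\kappa}$ now has coordinates of two kinds: accelerating tree forcing and the $k$-branching tree forcing on $k^{<\omega}$. For both kinds I would introduce a fusion order $\leq_n$ that preserves the first $n$ designated split levels, define $\leq_{F,\eta}$ on the product coordinate-wise, and verify the resulting fusion property for the mixed product in the style of \cite{baumgartner1983iterated}.

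Next I would copy over the definition of ``$\langle q,F,\eta,m,A\rangle$ consolidates $\dot f$'' verbatim, now requiring $A\subseteq (k+1)^{<m+1}$ to be a $k$-tree and $\dot f$ to name a real in $(k+1)^{\omega}$, and state the direct analogue of Claim \ref{iteration claim}. The top-level fusion that turns this claim into the full $(k+1)^{\omega}$ localization property is formally identical to the construction of $q_n, F_n, \eta_n, m_n, A_n$ in the original proof, and yields $q\in\mathds{P}_{\kappa}$ with $q\Vdash \mbox{``}\dot f\in [\bigcup_n A_n]\mbox{''}$. Inside the claim, Observations A, B, C and the maximality argument producing $p^+$, $M'$ and the forced values $f_i^{\sigma}\in (k+1)^{M'}$ transcribe unchanged; the combinatorial extraction of $S$ of size $n$ such that $\{f_i^{\sigma}\restrict l : i\in S, \sigma, l\}$ is a $k$-tree then becomes an application of Lemma \ref{comb iteration 2}.

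The main obstacle is that the pruned $\beta$-coordinate tree $T'(\beta)$ must belong to whichever of the two forcings is at coordinate $\beta$. For accelerating $\beta$, the choice $n=\nu(\beta)+2$ suffices exactly as in the original argument. For $k$-branching $\beta$, however, $T'(\beta)$ must be a $k$-branching subtree of $k^{<\omega}$, so the retained successors above each $\sigma(\beta)$ have to form the leaves of a $k$-branching sub-subtree of $T^+(\beta)$; in particular $|S|$ has to be a power of $k$ and the selected nodes must be closed under the branching structure. I would resolve this by letting $\nu(\beta)-\eta(\beta)$ be larger than $1$ at $k$-branching coordinates, choosing $\nu(\beta)=\eta(\beta)+j_\beta$ with $k^{j_\beta}\geq N(n,k)$, and strengthening Lemma \ref{comb iteration 2} so that when the index set $I$ is itself the leaf set of a $k$-branching tree of depth $j_\beta$, the output $S$ can be taken to be the leaf set of a $k$-branching sub-subtree. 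The strengthening follows the same pigeonhole-on-the-first-disagreement induction as Lemma \ref{comb case 1}, but at each recursive step one tracks both the input tree structure on $I$ and the output $k$-tree, enlarging the growth rate $N(n,k)$ to absorb the extra constraint. With this in hand, a uniform choice of $n$ that is simultaneously a power of $k$ and at least $\nu(\beta)+2$ for every accelerating $\beta\in F$ lets the combinatorial step run across both coordinate types, and defining $T'(\beta)$ in the standard way (with $(T'(\beta))^{\nu(\beta)}$ equal to the selected leaves above each $(T^+(\beta))^{\eta(\beta)}$ node) closes the claim.
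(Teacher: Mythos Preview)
Your overall architecture---mirror Lemma~\ref{iteration lemma}, generalize the combinatorics to $(k+1)^\omega$ via Lemma~\ref{comb iteration 2}, and split the proof of the analogue of Claim~\ref{iteration claim} according to whether the active coordinate $\beta$ is an accelerating tree or a $k$-branching tree---matches the paper exactly. The accelerating case is handled just as you describe.

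Where you diverge is in the $k$-branching case, and there you take a substantial detour. The paper's point is that when $\beta$ carries the $k$-branching forcing you do \emph{no thinning at all}: set $\nu(\beta)=\eta(\beta)+1$, find $p^{+}$ and $M'$ by the maximality argument, and simply keep every one of the $k$ successors above each $t\in (T^{+}(\beta))^{\eta(\beta)}$. For each $\eta$-level $\sigma$ you then obtain only $k$ functions $f_i^{\sigma}$ ($i<k$); any $k$ functions form a $k$-tree on their own, and the consolidation hypothesis (condition~4) together with Observation~C guarantees that whenever $\sigma_1\neq\sigma_2$ satisfy $f_i^{\sigma_1}\restrict m=f_j^{\sigma_2}\restrict m$ the functions coincide. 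Hence the whole family $\{f_i^{\sigma}:i<k,\ \sigma\}$ is already a $k$-tree, and $q'=p^{+}$ works. This is precisely the Newelski--Ros\l anowski mechanism the paper refers to; no invocation of Lemma~\ref{comb iteration 2} is needed at such coordinates.

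By contrast, your plan---jump $j_\beta$ split levels so that $k^{j_\beta}\geq N(n,k)$, take $n$ a power of $k$, and strengthen Lemma~\ref{comb iteration 2} so that the selected index set $S$ is itself the leaf set of a $k$-branching sub-subtree---is both unnecessary and not clearly correct as sketched. Forcing $S$ to respect an external $k$-branching structure is a genuine additional constraint; the first-disagreement pigeonhole of Lemma~\ref{comb case 1} produces one distinguished index plus a recursive set and gives you no control over how those sit inside the input tree on $I$. You would need a different induction, and the paper's route avoids the issue entirely.
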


\begin{proof}

Notice that the orders $\leq_{n}$ also make sense when forcing with $k$-branching trees of $k^{\omega}$.

The proof in full detail will have the same extension as the proof of Lemma \ref{iteration lemma}. Nevertheless, here we give a sketch of how to combine the technique used in \cite{newelski1993ideal} and the proof of \ref{iteration lemma}.

Everything works the same changing 2 for $k$ and $3$ for $k+1$. Now, to show the analogue of Claim \ref{iteration claim} we will have two cases:

\begin{enumerate}
\item If you are extending a node that comes from an  accelerating tree, then use Lemma \ref{iteration lemma 2} instead of Lemma \ref{iteration lemma}. Everything else works the same.

\item If you are extending a node that comes from a $k$-branching tree instead of using Lemma \ref{iteration lemma 2}, it is enough to find a condition like $p^{+}$.  Since the next split only has $k$ successors, they naturally form a $k$-tree. Everything else works the same as the proof of Claim \ref{iteration claim} or you can use the technique used in \cite{newelski1993ideal}.
\end{enumerate}

\end{proof}

\section{Main Theorem, conclusion and open questions}\label{Main reusl section}

\begin{theorem}\label{main theorem}
It is consistent with $ZFC$ that $\forall k\geq 2 (\mathfrak{L}_{k}<\mathfrak{v}_{k}^{g}=\mathfrak{c})$.
\end{theorem}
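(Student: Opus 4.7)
The plan is to take a ground model $V$ satisfying $GCH$ and force with $\mathds{P}_{\aleph_2}$, the countable support product of $\aleph_2$ copies of the accelerating tree forcing. Since each component has size $\aleph_1$ under CH and satisfies Axiom A (via the fusion order $\leq_{n}$ introduced in the proof of Lemma \ref{iteration lemma}), standard preservation arguments show that $\mathds{P}_{\aleph_2}$ is proper and, using CH in $V$, enjoys the $\aleph_2$-chain condition. Hence $\aleph_1$ and $\aleph_2$ are preserved in $V[G]$, and $\mathfrak{c}^{V[G]}=\aleph_2$, since the product adjoins $\aleph_2$ generic reals while having total size $\aleph_2^{\aleph_0}=\aleph_2$.

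For the upper bound on each $\mathfrak{L}_k$ I would invoke the corollary to Lemma \ref{iteration lemma}: $\mathds{P}_{\aleph_2}$ has the $(k+1)^{\omega}$-localization property simultaneously for every $k\geq 2$. Thus each $f\in (k+1)^{\omega}\cap V[G]$ is a branch of some $k$-tree in $V$; extending every such tree to a $k$-branching subtree of $\omega^{<\omega}$ with the same infinite branches produces a ground-model covering family. Under $CH$ in $V$ there are at most $\aleph_1$ such trees, so $\mathfrak{L}_k^{V[G]}\leq \aleph_1$ for every $k\geq 2$.

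For the lower bound $\mathfrak{v}_{k}^{g}\geq \aleph_2$, suppose toward contradiction that in $V[G]$ there is a family $\{\pi_{\xi}:\xi<\aleph_1\}$ of $k$-global adaptive predictors covering $\omega^{\omega}$. Each $\pi_\xi$ is coded by a single real; since $\mathds{P}_{\aleph_2}$ is proper and countably supported, every real of $V[G]$ has a nice name depending on countably many coordinates, so the whole family lies in $V[G\restrict J]$ for some $J\subseteq \aleph_2$ with $|J|\leq \aleph_1$. Choose $\alpha\in \aleph_2\setminus J$. The product factorization makes $G(\alpha)$ a generic for the accelerating tree forcing over the intermediate model $V[G\restrict J]$, and Lemma \ref{ground lemma}, applied with $V[G\restrict J]$ in the role of the ground model, guarantees that this generic branch is a function in $\omega^{\omega}$ that no $k$-global adaptive predictor of $V[G\restrict J]$ can predict. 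This contradicts the choice of the family. Combining the two estimates gives $\mathfrak{L}_k<\mathfrak{v}_{k}^{g}=\mathfrak{c}$ uniformly for every $k\geq 2$.

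The main obstacle is not any of the cardinal-arithmetic computations but rather the preservation package for $\mathds{P}_{\aleph_2}$: establishing properness, the $\aleph_2$-cc, and the reflection of real-valued names to countable subproducts, all in a form strong enough that the same generic extension works simultaneously for every $k\geq 2$. This essentially repackages the Axiom A and fusion machinery already developed inside the proof of Lemma \ref{iteration lemma}, together with the observation that Lemma \ref{ground lemma} is insensitive to the choice of $k$, so a single coordinate produces an unpredicted real for all $k$ at once.
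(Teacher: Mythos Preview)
Your proposal is correct and follows essentially the same approach as the paper: force over a model of $GCH$ with the countable support product of $\aleph_2$ copies of the accelerating tree forcing, use Axiom~A/fusion for cardinal preservation, the corollary of Lemma~\ref{iteration lemma} for $\mathfrak{L}_k\le\aleph_1$, and a reflection-to-a-small-subproduct argument for $\mathfrak{v}_k^{g}\ge\aleph_2$. The only cosmetic difference is that the paper phrases the lower-bound argument in terms of $k$-trees rather than predictors, and it makes explicit that $G(\alpha)$ is generic for the \emph{$V$-version} of the accelerating tree forcing over the intermediate model (so one cannot literally quote Lemma~\ref{ground lemma} with $V[G\restrict J]$ as ground model; rather one repeats its one-line density argument, which only needs that accelerating trees eventually out-branch any fixed $k$-tree of the intermediate model).
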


\begin{proof}
Starting with a model of $ZFC+GCH$ we can make a countable support product of the accelerating tree forcing describe in Lemma \ref{ground lemma}. Using Axiom A, as in in \cite{baumgartner1983iterated}, we know that the product preserves cardinals and that $\mathfrak{c}=\aleph_{2}$. Also, by Lemma \ref{iteration lemma}, the resulting model will have $\mathfrak{L}_{k}=\mathfrak{L}_{2}=\aleph_{1}$. We just need to show that in the extension $\mathfrak{v}_{k}^{g}=\aleph_{2}=\mathfrak{c}$.

Let $\mathds{P}_{\omega_{2}}=\prod_{\alpha\in \omega_{2}}\mathds{Q}_{\alpha}$ be the countable support product of accelerating tree forcings. Let $G=\{c_{\alpha} : \alpha\in \omega_{2}\}$ be generic over $\mathds{P}_{\omega_{2}}$. Now, for all $\beta<\omega_{1}$ let $T_{\beta}\subseteq \omega^{\omega}$ be a $k(\beta)$-tree, with $k(\beta)\in \omega$, in $V[G]$.

Now, in $V$, we can find $\dot T(\beta)$ a $\mathds{P}_{\alpha(\beta)}$-name for some $\alpha(\beta)\in \omega_{2}$. So, there is $\gamma\in \omega_{2}$ such that $\alpha(\beta)<\gamma$ for all $\beta$. Therefore, we have that $T_{\beta}\in V[\{c_{\alpha}: \alpha<\gamma\}]$ for all $\beta\in \omega_{1}$. 

Notice that if $T$ is an accelerating tree in $V$, then at the split $k(\beta)+1$ it has a branch that is not in $T_{\beta}$ in $V[\{c_{\alpha}: \alpha<\gamma\}]$. Then avoiding $T_{\beta}$ is a dense condition (in $V[\{c_{\alpha}: \alpha<\gamma\}]$) for accelerating trees from $V$.

Since $c_{\gamma}$ is a $V$-accelerating forcing generic over $V[\{c_{\alpha}: \alpha<\gamma\}]$, then $c_{\gamma}$ is not a branch of any $k$-tree in $V[\{c_{\alpha}: \alpha<\gamma\}]$, $k\in \omega$. Therefore, $c_{\gamma}$ is not a branch of any $T_{\beta}$.

This shows that, in $V[G]$, $\omega^{\omega}$ is not cover by $\{T_{\beta}:\beta\in \omega_{1}\}$. Since this was an arbitrary collection we have that $\mathfrak{v}^{g}_{k}=\aleph_{2}$ for all $k\in \omega$.

\end{proof}

This theorem proves that it is consistent that $\mathfrak{v}^{g}_{k}\neq \mathcal{L}_{k}$ and answers the question from Blass about the identity of $\mathfrak{v}^{g}_{k}$: they indeed are a different cardinal characteristic from the ones that are known.

Furthermore, we can see that there are more ways to do this split:

\begin{theorem}\label{main theorem 2}
For all $s\geq 2$ it is consistent with $ZFC$ that $\forall k\geq 2 (\mathfrak{L}_{s+1}<\mathfrak{L}_{s}=\mathfrak{v}_{k}^{g}=\mathfrak{c})$.
\end{theorem}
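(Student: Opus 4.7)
The plan is to mirror the strategy of Theorem \ref{main theorem}, but replace the product of accelerating tree forcings with the mixed countable support product analysed in Lemma \ref{iteration lemma 2}. Fix $s\geq 2$, start from a model of $ZFC+GCH$, and let $\mathds{P}_{\omega_2}$ be the length-$\omega_2$ countable support product alternating the accelerating tree forcing with the forcing of $(s+1)$-branching trees on $(s+1)^{<\omega}$, arranged so that both kinds of coordinate are cofinal in $\omega_{2}$. Cardinals are preserved and $\mathfrak{c}=\aleph_{2}$ in the extension by the Axiom A argument of \cite{baumgartner1983iterated}, exactly as in the proof of Theorem \ref{main theorem}.

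Lemma \ref{iteration lemma 2} with $k=s+1$ then gives that $\mathds{P}_{\omega_2}$ has the $(s+2)^{\omega}$ localization property. Under $GCH$ there are only $\aleph_{1}$ many $(s+1)$-trees in $V$, and these cover $(s+2)^{\omega}$ in the extension, so $\mathfrak{L}_{s+1}\leq \aleph_{1}<\aleph_{2}=\mathfrak{c}$. For the lower bound on $\mathfrak{L}_{s}$, let $\{T_{\beta}:\beta<\omega_{1}\}$ be any family of $s$-trees in $V[G]$; by the standard name-reflection argument used in Theorem \ref{main theorem}, there is $\gamma<\omega_{2}$ with $\{T_{\beta}\}\subseteq V_{\gamma}:=V[\{c_{\alpha}:\alpha<\gamma\}]$. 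Pick any coordinate $\gamma'>\gamma$ of the $(s+1)$-branching type; the corresponding generic $c_{\gamma'}\in (s+1)^{\omega}$ is Sacks-type generic over $V_{\gamma}$, and the standard density argument shows that every ground-model $s$-tree can be avoided at the next splitting level, because any condition has an extension whose $(s+1)$ immediate successors cannot all lie in a fixed $s$-tree. Hence $c_{\gamma'}$ is not a branch of any $T_{\beta}$, which forces $\mathfrak{L}_{s}=\aleph_{2}$. The equality $\mathfrak{v}^{g}_{k}=\aleph_{2}$ for every $k\geq 2$ follows verbatim from the argument of Theorem \ref{main theorem}, now applied at the accelerating-tree coordinates cofinal in $\omega_{2}$: those coordinates contribute reals of $\omega^{\omega}$ that dodge every ground-model $k$-tree simultaneously for all $k$.

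The main technical point is verifying that the $(s+2)^{\omega}$ localization established by Lemma \ref{iteration lemma 2} persists in a product of length $\omega_{2}$ rather than $\omega$; since supports are countable, every $\mathds{P}_{\omega_2}$-name for a real in $(s+2)^{\omega}$ lives in a countable subproduct, and the fusion/consolidation argument reduces to the countable case exactly as in Theorem \ref{main theorem}. Once this reduction is accepted the three inequalities above combine to yield the theorem, and the mild bookkeeping needed to interleave the two kinds of coordinate so that both classes remain cofinal in $\omega_{2}$ does not affect the fusion argument.
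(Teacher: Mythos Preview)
Your proposal is correct and follows essentially the same approach as the paper: you build the same mixed countable support product of accelerating tree forcing and $(s+1)$-branching tree forcing, invoke Lemma \ref{iteration lemma 2} to get $\mathfrak{L}_{s+1}=\aleph_1$, and use the name-reflection plus density argument at the $(s+1)$-branching (respectively accelerating) coordinates to push $\mathfrak{L}_s$ (respectively $\mathfrak{v}_k^g$) up to $\aleph_2$. The only differences are cosmetic---the paper fixes the interleaving as even/odd rather than merely asking both types to be cofinal, and it omits your explicit remark about reducing to countable subproducts.
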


\begin{proof}

Following the same strategy as above, starting with a model of $ZFC+GCH$ we can make a countable support product of the accelerating tree forcing alternated with forcing with $s+1$-branching trees of $(s+1)^{\omega}$. Just as before, we know that the product preserves cardinals and that $\mathfrak{c}=\aleph_{2}$. Also, by Lemma \ref{iteration lemma 2}, the resulting model will have $\mathfrak{L}_{s+1}=\aleph_{1}$. We just need to show that, in the extension, $\mathcal{L}_{s}=\mathfrak{v}_{k}^{g}=\aleph_{2}=\mathfrak{c}$.

Let $\mathds{P}_{\omega_{2}}=\prod_{\alpha\in \omega_{2}}\mathds{Q}_{\alpha}$ be the countable support product of accelerating tree forcings, when $\alpha$ is even and forcing with $s+1$ subtrees of $(s+1)^{\omega}$ when $\alpha$ is odd. Let $G=\{c_{\alpha} : \alpha\in \omega_{2}\}$ be generic over $\mathds{P}_{\omega_{2}}$. 

To see that $\mathfrak{v}_{k}^{g}=\aleph_{2}=\mathfrak{c}$, we can do the same as above. Now, showing that $\mathcal{L}_{s}=\aleph_{2}=\mathfrak{c}$ can be found in \cite{newelski1993ideal}. Nevertheless, for convenience to the reader, we give an argument here: 

For all $\beta<\omega_{1}$ let $T_{\beta}\subseteq (s+1)^{\omega}$ be a $s$-tree in $V[G]$. In $V$, we can find $\dot T(\beta)$ a $\mathds{P}_{\alpha(\beta)}$-name for some $\alpha(\beta)\in \omega_{2}$. So, there is $\gamma\in \omega_{2}$ such that $\alpha(\beta)<2\cdot\gamma+1$ for all $\beta$. Therefore, we have that $T_{\beta}\in V[\{c_{\alpha}: \alpha<2\cdot \gamma+1\}]$ for all $\beta\in \omega_{1}$. 

Since $c_{2\cdot \gamma+1}$ is a generic for the forcing using $s+1$-branching trees (from $V$) of $(s+1)^{\omega}$ over $V[\{c_{\alpha}: \alpha<\gamma\}]$, then $c_{2\cdot \gamma+1}$ is not a branch of any $s$-tree in $V[\{c_{\alpha}: \alpha<\gamma\}]$ (same reasoning as in Theorem \ref{main theorem}). Therefore, $c_{2\cdot \gamma+1}$ is not a branch of any $T_{\beta}$.

This shows that, in $V[G]$, $(s+1)^{\omega}$ is not cover by $\{T_{\beta}:\beta\in \omega_{1}\}$. Since this was an arbitrary collection we have that $\mathfrak{v}^{g}_{k}=\aleph_{2}$ for all $k\in \omega$.

\end{proof}

\begin{question}\label{corey}
What is the value of $cof(\mathcal{N})$ in the above models?
\end{question}

Theorem \ref{main theorem 2} shows that in order to have different values for $\mathfrak{v}_{k}^{g}$ and $\mathcal{L}_{k}$ it is not necessary that every $\mathcal{L}_{s}$ have the same value. In this same venue, we can wonder if it is necessary that all $\mathfrak{v}^{g}_{s}$ have the same value. In other words:

\begin{question}\label{other cut}
Can we have $\mathcal{L}_{k}=\mathfrak{v}^{g}_{3}<\mathfrak{v}^{g}_{2}$ for all $k\geq 2$?
\end{question}

For this question, it is not possible to use neither the accelerating tree forcing nor the forcing with $3$-branching trees of $3^{\omega}$, which are the two ways used in this paper to make $\mathfrak{v}^{g}_{2}=\mathfrak{c}$. Another approach will be to use trees of $\omega^{\omega}$ that branches more than $3$ times at each split. Nevertheless, I do not see any good reason for that forcing to have the $3^{\omega}$-localization property. Maybe a modification of it can do the trick.

Now, during the paper, the $(k+1)^{\omega}$ localization property played a really important role. In order to show that it was preserved the proofs showed above are really case specific. This is useful for our purposes, but a question arises:

\begin{question}
Can we show that the $(k+1)^{\omega}$-localization property is preserved under countable support iteration and  products?
\end{question}

This is likely to be possible. In \cite{zapletal2008n}, Zapletal showed that the $n$-localization property is preserved under countable support product and iteration of a broad variety of forcings (some kind of definable proper forcings). 

Finally, notice that $\mathfrak{v}^{g}_{k}$ is a cardinal characteristic that is usually really closed to $\mathfrak{c}$. This is not true in cardinal arithmetic, but it is true in the Chicon Diagram: all of these numbers are above $cof(\mathcal{N})$. So, in order to work with them, it is important to use forcing notions that are tame somehow (they cannot add Cohen or random reals, for example). In this case, we used a forcing notion with the $(k+1)^{\omega}$ localization property but, in the literature, there are examples of properties like the Sacks property, the $n$-localization property and, most recently, the shrink wrapping property (see \cite{gonzalez2017sacks}) that are also tame with reals. It is important to notice that most of these `tameness' properties relates to the idea of keeping the new reals inside a tree of some sort.

\begin{question}
Is there an underlying theorem (or meta theorem) that relates all (of some) of this tameness properties? 
\end{question}

One possible result could be that all of them are preserved under countable support product of a variety of forcings, but I do not have any good guess of whether this is possible or not.

\section{Aftermath}

Time after this paper was sent for review, we were able to proved a couple of results answering the questions that appear above. Since this are small results, we decided to include them here.

This first results answer question \ref{corey}, they are the result of a conversation with Corey Switzer during the XIX Graduate Student Conference in Logic in Madison, Wisconsin, April 2018.  

\begin{lemma}
The accelerating tree forcing has the Sacks property.
\end{lemma}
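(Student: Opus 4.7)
The plan is to run a single-forcing version of the fusion argument from the proof of Lemma \ref{iteration lemma} to produce, for every name $\dot f$ forced to lie in $\omega^{\omega}$ and every ground-model $h:\omega\to\omega$ with $h(n)\to\infty$, a condition $q\leq p_0$ and a ground-model slalom $S$ with $|S(n)|\leq h(n)$ satisfying $q\Vdash$``$\dot f(n)\in S(n)$'' for all but finitely many $n$. The key arithmetic input is that an accelerating tree can be pruned so that its $n$-th split has \emph{exactly} $n+2$ successors, so that $|(T)^k|=\prod_{j=0}^{k}(j+2)=(k+2)!$; this bound can then be absorbed by $h$ provided the fusion advances slowly enough.

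First I would prune the tree of a given condition $p_0=\langle \tau_0,T_0\rangle$ so that every $n$-th split has exactly $n+2$ successors; this preserves accelerating-ness because the definition only requires \emph{at least} $n+2$. Then I would construct a fusion sequence $p_k=\langle \tau_0,T_k\rangle$ with $p_{k+1}\leq_{k}p_k$ in the stratified order $\leq_n$ from Lemma \ref{iteration lemma}. At stage $k$, I would enumerate $(T_k)^{k}=\{\sigma_1,\ldots,\sigma_{N_k}\}$ with $N_k=(k+2)!$, and for each $i$ use a standard density argument (as in Observation A of Lemma \ref{iteration lemma}) to obtain an extension of $\langle \sigma_i,(T_k)_{\sigma_i}\rangle$ deciding $\dot f\restrict m_k$ for some $m_k\in\omega$, before pasting these strengthenings back below the fixed split nodes of $(T_k)^{k}$ to produce a single condition $p_{k+1}$. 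The numbers $m_k$ are chosen strictly increasing and fast enough that $(k+2)!\leq h(n)$ whenever $n\geq m_k$, which is possible because $h(n)\to\infty$.

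Letting $p^{\ast}$ be the fusion limit (available by the Axiom~A property of $\leq_n$ noted in the proof of Lemma \ref{iteration lemma}), the ground-model set $A_{k+1}\subseteq \omega^{m_{k+1}}$ of length-$m_{k+1}$ strings forced as $\dot f\restrict m_{k+1}$ by some $p_{k+1}\ast\sigma$ has cardinality at most $N_{k+1}$, and $p^{\ast}\Vdash$``$\dot f\restrict m_{k+1}\in A_{k+1}$''. Setting $S(n)=\{g(n):g\in A_{k+1}\}$ for $m_k\leq n<m_{k+1}$ gives $|S(n)|\leq (k+3)!\leq h(n)$ and $p^{\ast}\Vdash \dot f(n)\in S(n)$ for every $n\geq m_0$, with the finitely many indices below $m_0$ absorbed by setting $S$ arbitrarily there. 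The main technical point to verify is that pasting the $N_k$ strengthenings back below $(T_k)^{k}$ really produces an accelerating condition $p_{k+1}\leq_{k}p_k$: below each $\sigma\in (T_k)^{k}$ the subtree $(T_k)_{\sigma}$ is accelerating with its split counter already offset by $k$, so any accelerating subtree of $(T_k)_{\sigma}$ remains accelerating after being plugged in below $\sigma$ in $T_{k+1}$. Once this routine verification is in place, the density reductions below a single condition and the fusion of the stratified order are exactly as in the proof of Lemma \ref{iteration lemma}, and the argument closes.
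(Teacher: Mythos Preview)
Your proposal is correct and follows the same fusion strategy as the paper: prune the condition so that nodes at the $k$-th split level decide an initial segment of $\dot f$, then read off a ground-model slalom from the finitely many values so determined. The paper is terser---it simply cites the technique of Lemma~\ref{iteration lemma}, fixes the decided length at $n$, and asserts the bound $|s(n)|\leq n!$ directly---whereas you prove the equivalent ``for every $h\to\infty$'' formulation by letting the decided lengths $m_k$ outpace $h$, and you are more explicit about pruning to exactly $n+2$ successors per split (a step the paper leaves implicit but which is needed for its factorial bound).
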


\begin{proof}

\begin{definition}
Given a function $f:\omega\rightarrow \omega\setminus\{0\}$ an slalom of growth $f$ is a function $s:\omega\rightarrow [\omega]^{<\omega}$ such that $|s(n)|\leq f(n)$ for all $n$. We say that $g\in \omega^{\omega}$ goes thorugh $s$ if and only if $g(n)\in s(n)$ for all $n$.
\end{definition}

\begin{definition}\cite{shelah1998proper}
We say that a forcing has the Sacks property if and only if there is $g\in V$ such that $g:\omega\rightarrow \omega\setminus\{0\}$ and diverges to infinity such that for all $f\in \omega^{\omega}\cap V[G]$ there is a tree $T\in V$ such that $f$ is a branch of $T$ and the $n$-th level of $T$ has size $g(n)$.
\end{definition}

Notice that, given an slalom of growth $f$, we can generate a tree $T$ such that its $n$-th level has size $\prod_{i=0}^{n}f(i)$.

To show that the accelerating tree forcing has the Sacks property we will show that every real in $\omega^{\omega}\cap V[G]$ goes through an slalom $s\in V$ such that $|s(n)|\leq n!$.

Let $\mathds{P}$ be the accelerating tree forcing. From Lemma \ref{iteration lemma} we know that given a name $\dot f$ such that $\Vdash_{\mathds{P}}\dot f\in \omega^{\omega}$ then there is a condition $\langle p, T\rangle \in \mathds{P}$ such that given $\sigma\in T^{n}$ (notation defined in Lemma \ref{iteration lemma}) we have that there is $\tau_{\sigma}\in \omega^{n}$ such that $\langle \sigma, T_{\sigma}\rangle \Vdash \dot f\restrict n=\tau_{\sigma}$.

In $V$ define $s:\omega\rightarrow [\omega]^{<\omega}$ such that \[s(n)=\{\tau_{\sigma}(n): \sigma\in T^{n}\}.\]

Since $T$ is accelerating, we have that $|s(n)|\leq n!$.
\end{proof}

It is important to mention that in August 2019 it was brought to our attention that, in \cite{GeschkeDual}, Geschke showed indirectly that the accelerating tree forcing has the Sacks property\footnote{He forced the Dual Coloring Axiom and showed that this axiom implies $cov(\mathcal{N})=\aleph_{1}$. }. We hope that this more direct proof is both more convinient for the reader and, maybe, useful for future work.

\begin{theorem}
In the forcing extension generated after forcing with countable support product of accelerating tree forcing  $cof(\mathcal{N})=\aleph_{1}$.
\end{theorem}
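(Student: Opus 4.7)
The plan is to combine the preceding lemma (accelerating tree forcing has the Sacks property) with a preservation theorem for the Sacks property under countable support products of proper forcings, and then invoke Bartoszyński's characterization of $cof(\mathcal{N})$ as a covering number for slaloms.

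First I would observe that each factor of $\mathds{P}_{\omega_{2}}$ is proper: this follows from Axiom A, which is already used in the proof of Theorem \ref{main theorem} (and was verified inside the proof of Lemma \ref{iteration lemma}). The classical preservation theorem for the Sacks property under countable support products/iterations of proper forcings (see, for instance, Chapter 6 of Bartoszyński and Judah, \emph{Set Theory: On the Structure of the Real Line}, or Chapter VI of \cite{shelah1998proper}) then gives that $\mathds{P}_{\omega_{2}}$ itself has the Sacks property. Consequently, fixing any divergent $g \in V$ (for concreteness $g(n) = n!$, as produced by the proof of the previous lemma), every $f \in \omega^{\omega} \cap V[G]$ passes through a slalom $s \in V$ with $|s(n)| \leq g(n)$ for all $n$.

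Next I would use the ground model CH. In $V$ the set of all slaloms of growth $g$ has size $(2^{\aleph_{0}})^{V} = \aleph_{1}^{V}$, and cardinals are preserved by the product (by Axiom A, as in \cite{baumgartner1983iterated}); so this family still has size $\aleph_{1}$ in $V[G]$. Combined with the previous step, it is a family of $\aleph_{1}$ slaloms of growth $g$ in $V[G]$ that catches every real of $V[G]$. By Bartoszyński's characterization, $cof(\mathcal{N})$ equals the minimum cardinality of such a catching family for any fixed divergent $g$, so $cof(\mathcal{N})^{V[G]} \leq \aleph_{1}$. Since $\aleph_{1} \leq cof(\mathcal{N})$ is provable in ZFC, equality follows.

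The main obstacle is the preservation of the Sacks property under countable support products. A self-contained proof would essentially replay the consolidation/fusion argument of Lemma \ref{iteration lemma} with ``$2$-tree'' replaced by ``slalom of growth $g$'' and the combinatorial lemmas replaced by the trivial observation that the coordinatewise union of boundedly many slaloms of bounded growth is again a slalom of bounded growth; but since the preservation result is standard and well documented, it is cleaner to invoke it than to reprove it. Everything else is a routine application of Bartoszyński's characterization together with the CH assumption in the ground model.
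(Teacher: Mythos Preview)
Your proposal is correct and follows essentially the same route as the paper: invoke the preceding lemma to get the Sacks property for a single factor, cite Shelah's preservation of the Sacks property under countable support products, and then apply Bartoszy\'nski's slalom characterization of $\mathrm{cof}(\mathcal{N})$ together with the ground-model CH to conclude $\mathrm{cof}(\mathcal{N})^{V[G]}=\aleph_{1}$. The only cosmetic difference is that the paper phrases the last step as ``the Sacks property implies $\mathrm{cof}(\mathcal{N})^{V}=\mathrm{cof}(\mathcal{N})^{V[G]}$'' and then uses $\mathrm{cof}(\mathcal{N})^{V}=\aleph_{1}$, whereas you count the ground-model slaloms directly; the content is the same.
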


\begin{proof}

\begin{theorem}(from \cite{blass})
$cof(\mathcal{L})$ is the cardinality of the smallest family $\mathcal{F}$  of slaloms of growth $f$ (for $f:\omega\rightarrow \omega\setminus\{0\}$ increasing and diverging to infinity) such that all reals in $\omega^{\omega}$ go through a slalom in $\mathcal{F}$.
\end{theorem}

\begin{theorem} (from \cite{shelah1998proper})
The countable support product of forcings that have the Sacks property have the Sacks property.
\end{theorem}

Notice that, if a forcing has the Sacks property then $cof(\mathcal{N})^{V}=cof(\mathcal{N})^{V[G]}$.

Since the accelerating tree forcing has the Sacks property, this shows that the model generated in Theorem \ref{main theorem} satisfies $cof(\mathcal{N})^{V[G]}=\aleph_{1}$.

In Theorem \ref{main theorem 2} we get the same using the fact that forcing with $k$-branching trees also has the Sacks property.
\end{proof}

Finally, these are answers for different variations of question \ref{other cut}.

In the flavor of the contructibility degrees\footnote{As in the paper \emph{The Cichon Diagram for Degrees of Constructibility} by Corey Switzer that can be at https://coreyswitzer.files.wordpress.com/2018/09/the-cichon-diagram-for-degrees-of-relative-constructibility.pdf}, we have that:

\begin{lemma}
If a forcing has the $k+1$ localization property but it does not have the $k$-localization then it do not have the $(k+1)^{\omega}$ localization property. 
\end{lemma}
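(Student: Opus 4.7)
The plan is to prove the statement constructively: assuming the forcing has the $(k+1)$-localization property and fails the $k$-localization property, I will exhibit a single witness $g\in (k+1)^{\omega}\cap V[G]$ that is not a branch of any ground-model $k$-tree, refuting the $(k+1)^{\omega}$-localization. The key idea is that every $(k+1)$-tree $T$ comes with a canonical ``choice-coding'' of its branches into $(k+1)^{\omega}$, definable in $V$ from $T$, and under this coding any $k$-tree covering the code pulls back to a $k$-tree covering the original branch.

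First, from the failure of $k$-localization, I would take $f\in \omega^{\omega}\cap V[G]$ that lies on no ground-model $k$-tree, and, using the $(k+1)$-localization property, fix a $(k+1)$-tree $T\in V$ with $f\in [T]$. Working in $V$, I would enumerate the successors of each splitting node of $T$ (each such node has between $2$ and $k+1$ children) and define $c\colon T\to (k+1)^{<\omega}$ recursively: $c(\emptyset)=\emptyset$; $c(\sigma^{\frown}a)=c(\sigma)$ when $\sigma$ has a unique successor in $T$; and $c(\sigma^{\frown}a_i)=c(\sigma)^{\frown}i$ when $\sigma$ is a splitting node whose $i$-th successor is $\sigma^{\frown}a_i$. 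Finally set $g=\bigcup_n c(f\restrict n)$.

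Before the main argument, I would check that $g$ is genuinely an infinite sequence, i.e., that $f$ crosses infinitely many splitting nodes of $T$. If it did not, $f$ would be tail-determined by $T\in V$ and hence $f\in V$; but then the $1$-tree $\{f\restrict n:n\in\omega\}$ would be a ground-model $k$-tree covering $f$, contradicting the choice of $f$. So $g\in(k+1)^{\omega}\cap V[G]$.

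For the main contradiction, suppose toward contradiction that $U\in V$ is a $k$-tree with $g\in[U]$, and form the ground-model tree $U'=c^{-1}[U]\subseteq T$. Then $f\in [U']$ because each $c(f\restrict n)$ is an initial segment of $g$. The definition of $c$ is tailored so that at a non-splitting node of $T$ a node of $U'$ has at most one successor, while at a splitting node of $T$ the successors of $\sigma$ in $U'$ biject with the successors of $c(\sigma)$ in $U$, hence number at most $k$. Thus $U'$ is a ground-model $k$-tree covering $f$, contradicting the choice of $f$. I expect the only delicate spot to be this last bookkeeping step, namely verifying that the at-most-$k$-successors bound really transfers through $c$; the rest is the clean observation that $(k+1)$-branching differs from $k$-branching by exactly one extra choice per split, and this is precisely the information $c$ records into $(k+1)^{\omega}$.
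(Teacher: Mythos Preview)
Your proof is correct and follows essentially the same idea as the paper: code the branch $f$ of the ground-model $(k+1)$-tree $T$ into $(k+1)^{\omega}$ via a $V$-definable map, then pull back any putative covering $k$-tree to a ground-model $k$-tree covering $f$. The paper does this more tersely by asserting a ``bijection $T\to (k+1)^{<\omega}$'' and pulling back along it; your explicit split-coding map $c$ is the honest version of that bijection.

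Two small points to tighten. First, your claim that at a splitting node the successors of $\sigma$ in $U'$ \emph{biject} with the successors of $c(\sigma)$ in $U$ is slightly too strong: if $\sigma$ has only $j<k+1$ children in $T$, then $U$ may contain children $c(\sigma)^{\frown}i$ with $i\geq j$ that correspond to nothing in $T$. What survives (and is all you need) is the inequality: the successors of $\sigma$ in $U'$ inject into the successors of $c(\sigma)$ in $U$, hence number at most $k$. Second, $U'=c^{-1}[U]$ may have dead ends and thus fail the ``at least one successor'' clause in the paper's definition of $k$-tree; prune $U'$ to its nodes with infinite extension (still in $V$, still containing $f$) before declaring victory. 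Neither point affects the substance of your argument.
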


\begin{proof}
In $V[G]$ let $b\in \omega^{\omega}$ be such that it is not in any $k$ tree from the ground model. Now, let $T$ be a $k+1$-branching tree from $V$ such that $b\in T$. Notice that, in $V$, there is a bijection $f:T\rightarrow (k+1)^{<\omega}$ so, in $V[G]$, this induces a function $f^{\ast}:[T]\rightarrow (k+1)^{\omega}$, where $[T]$ are the branches of $T$.

Notice that $f^{\ast}(b)$ is also not in any $k$ tree from the ground model. If it were, say in $A$, we will have that $b\in f^{-1}(A)$, but $f^{-1}(A)$ is a $k$ tree from $V$.
\end{proof}

Now, the following results is in the direction of the work done with Noah Schweber:

\begin{lemma}
If a Turing degree has the property that every total function it computes is a branch of a $k+1$ branching computable tree but it computes a function that escapes every $k$ branching computable tree then it computes a function in $(k+1)^{\omega}$ that escapes every $k$ branching tree. 
\end{lemma}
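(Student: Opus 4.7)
The plan is to transliterate the proof of the preceding lemma to the computability setting, replacing ``ground model'' throughout by ``computable''. Fix a Turing degree $\mathbf{d}$ satisfying the two hypotheses and pick $b\in\omega^{\omega}$, computable from $\mathbf{d}$, which escapes every $k$-branching computable tree; this exists by the second hypothesis. By the first hypothesis, there is a $(k+1)$-branching computable tree $T\subseteq\omega^{<\omega}$ with $b\in [T]$.

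Next I would build a computable bijection $f:T\to (k+1)^{<\omega}$ in parallel with the $f$ of the preceding lemma: send the root to the empty string, and encode each node $\sigma\in T$ by the sequence of choices (among the $k+1$ options) that the path from the root to $\sigma$ makes at each splitting node of $T$ that it traverses, with the non-splitting stretches contributing nothing. Since $T$ is computable and its splitting pattern is effectively identifiable, $f$ is computable and order-preserving, so it induces a computable bijection $f^{\ast}:[T]\to (k+1)^{\omega}$. Let $c=f^{\ast}(b)\in (k+1)^{\omega}$. Then $c\leq_T b$, so $c$ is computed by $\mathbf{d}$.

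To finish, I would show $c$ escapes every $k$-branching computable tree. Suppose toward a contradiction that $A\subseteq (k+1)^{<\omega}$ is a $k$-branching computable tree with $c\in [A]$. Then $f^{-1}[A]$ is a computable subtree of $T$, and at each of its nodes $\sigma$ the number of successors in $f^{-1}[A]$ is either $1$ or $k$: at splitting nodes of $T$ this is the $k$-branching structure of $A$ transported back through $f$, and at non-splitting nodes of $T$ only the unique successor is available. Hence $f^{-1}[A]$ is a $k$-branching computable tree in $\omega^{<\omega}$, and $c\in [A]$ forces $b\in [f^{-1}[A]]$, contradicting the escape property of $b$.

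The main obstacle is really only bookkeeping: one must verify that a $(k+1)$-branching computable tree $T$ genuinely admits such a computable bijection $f$ of its splitting structure onto $(k+1)^{<\omega}$, and that the pullback of a $k$-branching subtree of $(k+1)^{<\omega}$ is still $k$-branching when regarded as a subtree of $\omega^{<\omega}$. The non-splitting stretches of $T$ are the only slightly delicate point, but they pass through as unique-successor segments and do not disturb the $1$-or-$k$ successor count; with that verified, the argument is a line-by-line transcription of the preceding set-theoretic lemma.
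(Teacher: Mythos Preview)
Your proposal mirrors the paper's proof exactly: the paper's argument is the single line ``The same proof of the above theorem works, since given that $T$ is computable, there is a computable function from it to $(k+1)^{<\omega}$,'' and you have unpacked precisely that transliteration.

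One caveat: the step you dismiss as bookkeeping is more delicate than either you or the paper acknowledges. For a computable $(k+1)$-branching tree $T\subseteq\omega^{<\omega}$, whether a given node $\sigma$ is splitting is in general only a $\Sigma^0_1$ question (one must search through unboundedly many $j$ for a second successor $\sigma\concat j\in T$), so the splitting pattern need \emph{not} be effectively identifiable, and your map $f$ as described need not be computable; hence $c=f^{\ast}(b)$ is not obviously $\mathbf{d}$-computable. A cleaner route is the length-preserving injection $h(\sigma)(i)=\lvert\{j<\sigma(i):(\sigma\restrict i)\concat j\in T\}\rvert$, which \emph{is} computable; then $c=h^{\ast}(b)\leq_T\mathbf{d}$, and for computable $k$-branching $A$ the pullback $h^{-1}[A]$ is a computable subtree of $T$ with at most $k$ successors at each node. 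One then still has to handle possible dead ends in $h^{-1}[A]$, but this is the genuinely routine part. Your overall architecture matches the paper's; just be aware that ``splitting pattern effectively identifiable'' is not a free move here.
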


\begin{proof}
The same proof of the above theorem works, since given that $T$ is computable, there is a computable function from it to $(k+1)^{<\omega}$. 
\end{proof}

With this technique, we can answer in a negative way question \ref{other cut}.

\begin{theorem}
The following equality is true $ \mathfrak{v}^{g}_{k}=\max\{\mathfrak{v}^{g}_{k+1}, \mathfrak{L}_{k}\}$. Furthermore, if $\mathfrak{v}^{g}_{k+1}<\mathfrak{v}^{g}_{k}$ then $\mathfrak{L}_{k+1}<\mathfrak{L}_{k}$.
\end{theorem}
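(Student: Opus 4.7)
The plan is to prove the equality $\mathfrak{v}^{g}_{k} = \max\{\mathfrak{v}^{g}_{k+1}, \mathfrak{L}_{k}\}$ as the main content and deduce the ``Furthermore'' clause as a one-line corollary. The easy inequality $\mathfrak{v}^{g}_{k} \geq \max\{\mathfrak{v}^{g}_{k+1}, \mathfrak{L}_{k}\}$ is essentially recorded in the introduction: a family of $k$-trees covering $\omega^{\omega}$ is in particular a family of $(k+1)$-trees covering $\omega^{\omega}$ (giving $\mathfrak{v}^{g}_{k+1} \leq \mathfrak{v}^{g}_{k}$), and the same family covers the subset $(k+1)^{\omega} \subseteq \omega^{\omega}$ (giving $\mathfrak{L}_{k} \leq \mathfrak{v}^{g}_{k}$).

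For the hard inequality $\mathfrak{v}^{g}_{k} \leq \max\{\mathfrak{v}^{g}_{k+1}, \mathfrak{L}_{k}\}$, I would fix $\mathcal{F}_{1}$ a family of $(k+1)$-trees in $\omega^{<\omega}$ witnessing $\mathfrak{v}^{g}_{k+1}$ and $\mathcal{F}_{2}$ a family of $k$-trees in $(k+1)^{<\omega}$ witnessing $\mathfrak{L}_{k}$. For each $T \in \mathcal{F}_{1}$ I would canonically index the at most $k+1$ children of every node so that each child $\tau$ carries an index $c(\tau) \in \{0, 1, \dots, k\}$, and define a length-preserving tree embedding $\psi_{T}:T \to (k+1)^{<\omega}$ by $\psi_{T}(\sigma) = (c(\sigma \restrict 1), c(\sigma \restrict 2), \dots, c(\sigma))$. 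This extends naturally to a map $\psi_{T}^{*}:[T] \to (k+1)^{\omega}$ on branches. The key construction is then the pull-back: for each pair $(T, S) \in \mathcal{F}_{1} \times \mathcal{F}_{2}$ set $T_{S} = \{\sigma \in T : \psi_{T}(\sigma) \in S\}$ and prune dead ends. Since the $T_{S}$-successors of any node $\sigma$ biject with the $S$-successors of $\psi_{T}(\sigma)$, of which there are at most $k$, the pruned $T_{S}$ is a $k$-tree.

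The covering argument is then immediate: given $f \in \omega^{\omega}$, pick $T \in \mathcal{F}_{1}$ with $f \in [T]$ and then pick $S \in \mathcal{F}_{2}$ covering $\psi_{T}^{*}(f) \in (k+1)^{\omega}$; monotonicity of $\psi_{T}$ forces $\psi_{T}(f \restrict n) \in S$ and hence $f \restrict n \in T_{S}$ for every $n$, giving $f \in [T_{S}]$. So $\{T_{S} : T \in \mathcal{F}_{1}, S \in \mathcal{F}_{2}\}$ witnesses $\mathfrak{v}^{g}_{k} \leq |\mathcal{F}_{1}| \cdot |\mathcal{F}_{2}| = \max\{\mathfrak{v}^{g}_{k+1}, \mathfrak{L}_{k}\}$. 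The main point requiring care is to define $\psi_{T}$ as a length-preserving embedding into $(k+1)^{<\omega}$ (rather than the split-decoding of the informal argument in the preceding lemma): this ensures $\psi_{T}^{*}$ is uniformly defined on all branches, regardless of how often a given branch passes through a splitting node of $T$.

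Finally, the ``Furthermore'' clause follows in one step: if $\mathfrak{v}^{g}_{k+1} < \mathfrak{v}^{g}_{k}$, the proved equality forces $\mathfrak{L}_{k} = \mathfrak{v}^{g}_{k}$, and combining with the level-$(k+1)$ instance of the easy inequality $\mathfrak{L}_{k+1} \leq \mathfrak{v}^{g}_{k+1}$ yields $\mathfrak{L}_{k+1} \leq \mathfrak{v}^{g}_{k+1} < \mathfrak{v}^{g}_{k} = \mathfrak{L}_{k}$.
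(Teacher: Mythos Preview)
Your proposal is correct and follows essentially the same strategy as the paper: pull back a witnessing family of $k$-trees for $\mathfrak{L}_{k}$ along tree embeddings of each $(k+1)$-tree in a witnessing family for $\mathfrak{v}^{g}_{k+1}$, obtaining a covering family of $k$-trees for $\omega^{\omega}$ of size $\mathfrak{v}^{g}_{k+1}\cdot\mathfrak{L}_{k}$, and then derive the ``Furthermore'' exactly as the paper does. Your use of a length-preserving embedding $\psi_{T}:T\to (k+1)^{<\omega}$ is in fact a small technical improvement over the paper's split-decoding bijection (from the lemma just above), since it applies uniformly to all $(k+1)$-trees without needing the tacit assumption that every branch passes through infinitely many splitting nodes; one nitpick is that the $T_{S}$-successors of $\sigma$ only \emph{inject} into the $S$-successors of $\psi_{T}(\sigma)$ rather than biject, but that is all you need.
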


\begin{proof}
Let $\mathfrak{v}^{g}_{k+1}=\kappa$. This means that $\omega^{\omega}$ can be covered by $\kappa$ $k+1$-trees.

Now, if $\mathfrak{L}_{k}=\lambda$, this means that $(k+1)^{\omega}$ can be covered by $\lambda$ $k$-trees.

Using a function like the one in the lemma above, given a cover of $\omega^{\omega}$ with $k+1$ trees, we can cover each one of them with $\lambda$ $k$-trees creating a cover of $\kappa \cdot \lambda$ $k$-trees of $\omega^{\omega}$. This means that $\mathfrak{v}^{g}_{k}\leq \kappa \cdot \lambda=\max\{\mathfrak{v}^{g}_{k+1}, \mathfrak{L}_{k}\}$. Since $\mathfrak{v}^{g}_{k}\geq \mathfrak{v}^{g}_{k+1}$ and $\mathfrak{v}^{g}_{k}\geq \mathfrak{L}_{k}$, we have that $\mathfrak{v}^{g}_{k}=\max\{\mathfrak{v}^{g}_{k+1}, \mathfrak{L}_{k}\}$.

For the furthermore, if  $\mathfrak{v}^{g}_{k+1}<\mathfrak{v}^{g}_{k}$ then $\mathfrak{L}_{k}=\mathfrak{v}^{g}_{k}$ and \[\mathfrak{L}_{k+1}\leq \mathfrak{v}^{g}_{k+1} <\mathfrak{v}^{g}_{k}=\mathfrak{L}_{k}\]
\end{proof}

This creates a new question:

\begin{question}
Can $\mathfrak{L}_k$ be express as the maximum or the minimum of other cardinal characteristics?
\end{question}

Finally, the fact that $\mathfrak{v}^{g}_{k}=\max\{\mathfrak{v}^{g}_{k+1}, \mathfrak{L}_{k}\}$ also translate to computability theory, but it becomes a trivial result:

\begin{lemma}
Any Turing degree that computes a function $f\in \omega^{\omega}$ that escapes all $k$ computable trees either computes a function that escapes all $k+1$ computable trees (that same $f$) or a function $g\in (k+1)^{\omega}$ that escapes all $k$-computable trees (the image of $f$ under a certain computable function).
\end{lemma}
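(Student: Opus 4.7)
The plan is a direct computability-theoretic translation of the cardinal argument above, specialized to a single Turing degree, using the sibling-relabeling idea of the previous two lemmas. I proceed by case analysis on whether $f$ itself witnesses the first alternative: if $f$ is not a branch of any computable $(k+1)$-tree, then the given degree already computes $f\in\omega^{\omega}$ escaping all computable $(k+1)$-trees, and we are done.

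Otherwise, fix a computable $(k+1)$-tree $T\subseteq\omega^{<\omega}$ with $f\in[T]$ and define the \emph{sibling-index map} $\phi\colon T\to(k+1)^{<\omega}$ recursively by $\phi(\emptyset)=\emptyset$ and $\phi(\sigma\concat\langle c_i\rangle)=\phi(\sigma)\concat\langle i\rangle$, where $c_0<c_1<\cdots<c_j$ (with $j\le k$) enumerates the successors of $\sigma$ in $T$ in increasing order. Because $T$ is computable and has at most $k+1$ successors at each node, $\phi$ is uniformly computable and is a tree-isomorphism onto its image $\phi(T)\subseteq(k+1)^{<\omega}$. Set $g=\phi(f)\in(k+1)^{\omega}$; then $g$ is computable from $f$ together with an index for $T$, so the given degree computes $g$.

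To see that $g$ escapes every computable $k$-tree, I would argue by contradiction: suppose $A\subseteq(k+1)^{<\omega}$ is a computable $k$-tree with $g\in[A]$, and set $B=\{\sigma\in T:\phi(\sigma)\in A\}\subseteq\omega^{<\omega}$. This $B$ is computable, prefix-closed, contains $f$ as a branch (since $\phi(f\restrict n)=g\restrict n\in A$ for all $n$), and has at most $k$ successors at each node, because the tree-isomorphism $\phi$ places the successors of $\sigma$ in $B$ in bijection with a subset of the at-most-$k$ successors of $\phi(\sigma)$ in $A$. The only technical nuisance is that $B$ may have dead-end nodes, which the paper's definition of $k$-tree forbids; this is fixed uniformly computably by attaching a single stick at each dead end (dead ends of $B$ are decidable from $A$, $T$, and $\phi$), yielding a genuine computable $k$-tree $B^{+}\supseteq B$ containing $f$, contradicting the hypothesis on $f$. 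The principal, and essentially only, non-routine step is the sibling-relabeling $\phi$, which faithfully transports $k$-tree structure between $(k+1)^{<\omega}$ and $T$.
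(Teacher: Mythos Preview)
Your proposal is correct and follows exactly the approach the paper intends: the paper calls this result trivial and gives no explicit argument, merely pointing back to the sibling-relabeling idea of the two preceding lemmas (a computable map from a computable $(k+1)$-tree $T$ into $(k+1)^{<\omega}$), which is precisely your $\phi$. Your treatment is in fact more careful than the paper's sketch, since you make explicit the case split on whether $f$ already escapes all computable $(k+1)$-trees and you handle the dead-end issue in $B$, a point the paper elides.
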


\pagebreak

\bibliographystyle{abbrv}
\bibliography{biblio}

\end{document}